\def\@rmrk#1#2{\refstepcounter
    {#1}\@ifnextchar[{\@yrmrk{#1}{#2}}{\@xrmrk{#1}{#2}}}
\makeatletter\@addtoreset{equation}{section}\makeatother
 \newfont{\bfit}{cmbxti10 scaled 2000}
 \newfont{\biggi}{cmr12 scaled 2000}
 \newcommand{\R}{\mathbb{R}}
 \newcommand{\N}{\mathbb{N}}
 \newcommand{\me}{\mathbb{E}}
 \newcommand{\skrib}{{\mathcal B}}
 \newcommand{\skric}{{\mathcal C}}
 \newcommand{\skrim}{{\mathcal M}}
 \newcommand{\skrin}{{\mathcal N}}
 \newcommand{\skrix}{{\mathcal X}}
 \newcommand{\sfrac}[2]{\mbox{$\frac{#1}{#2}$}}
\def\1{{\mathchoice {1\mskip-4mu\mathrm l}      
{1\mskip-4mu\mathrm l}
{1\mskip-4.5mu\mathrm l} {1\mskip-5mu\mathrm l}}}
\newcommand{\eq}{\begin{equation}}
\newcommand{\en}{\end{equation}}
\renewcommand{\subsection}{\secdef \subsct\sbsect}
\newcommand{\subsct}[2][default]{\refstepcounter{subsection}
\vspace{0.15cm}
{\flushleft\bf \arabic{section}.\arabic{subsection}~\bf #1  }
\nopagebreak\nopagebreak}
\newcommand{\sbsect}[1]{\vspace{0.1cm}\noindent
{\bf #1}\vspace{0.1cm}}
\newtheorem{theorem}{Theorem}[section]
\newtheorem{lemma}[theorem]{Lemma}
\newtheorem{cor}[theorem]{Corollary}
\newtheorem{prop}[theorem]{Proposition}
\newtheoremstyle{thm}{1.5ex}{1.5ex}{\itshape\rmfamily}{}
{\bfseries\rmfamily}{}{2ex}{}
\newtheoremstyle{rem}{1.3ex}{1.3ex}{\rmfamily}{}
{\itshape\rmfamily}{}{1.5ex}{}
\theoremstyle{rem}
\def\thebibliography#1{\section*{References}
  \list%
  {\arabic{enumi}.}
    {\settowidth\labelwidth{[#1]}\leftmargin\labelwidth
    \advance\leftmargin\labelsep
    \parsep0pt\itemsep0pt
    \usecounter{enumi}}
    \def\newblock{\hskip .11em plus .33em minus .07em}
    \sloppy                   
    \sfcode`\.=1000\relax}
\begin{document}
\title[Large deviations for spatial telecommunication systems: The boolean mode]
{\Large Large deviations for spatial telecommunication systems: The boolean mode}
	\author[]{}

\maketitle
\thispagestyle{empty}
\vspace{-0.5cm}

\centerline{\sc By  A. K. Boahen$^1$, T. Katsekpor$^{2}$ and  K.  Doku-Amponsah$^{3}$}

{$^1$AIMS  Ghana}

{$^2$Department  of  Mathematics, University of  Ghana, Legon,Accra}

{$^3$Department  of  Statistics and Actuarial  Science, University of  Ghana, Legon,Accra}

{$^3$ Email: kdoku-amponsah@ug.edu.gh}

\vspace{0.5cm}

\renewcommand{\thefootnote}{}
\renewcommand{\thefootnote}{1}

\vspace{-0.5cm}

\begin{quote}{\small }{\bf Abstract.}
Spatial telecommunication systems have evolved along the years, leading to some concerns that telecommunication companies are facing today. The main inquietude is the ability to provide quality service to customers or users in a dense regime. Therefore, questions such as : what is the best possible configurations of base stations and users that maximizes quality service? Is it possible to estimate and control the probability of bad service, which may be seen as a rare event? and many more arise. These questions often involve estimating the tail distribution of events, which falls under the scope of large deviation principles. In this article, we associate with the Boolean model, the  empirical marked measure which will serve  as  a  statistic  for  the  intensity  measure  of  the  Marked  Poisson  Point  Process  of  devices or  users  and  the  empirical  connectivity measure  which  will  serve  as  a  statistic  for  coverage   probability density  of  the  spatial  telecommunication  area. For  these empirical  measures,  prove large deviation principle (LDP) for well-defined empirical measures.

\end{quote}\vspace{0.5cm}

\textit{ Keywords:}{ \bf Boolean Model, Marked Poisson Point  Processes, Empirical  Measures, Large deviations, Coverage probability  density, Intensity  measure, Relative Entropy }

\vspace{0.3cm}

\textit{AMS Subject Classification:} 60F10, 05C80, 68Q87
\vspace{0.3cm}

\section{Introduction }
 Throughout the ages, telecommunication has played a very important role in humanity. It has become an integral part of our world. From sending a text message or an email, a WhatsApp video or voice note, or even making a telephone call, one cannot do without any means of communication in general and telecommunication, in particular. While telecommunication took a handful number of forms at the beginning, it has grown at a mind-blowing rate and presents itself under various forms. For example, smoke signals were used in some part of Northern America and India to communicate whereas talking drums was a means of communication in some parts of Africa, with notable example being the Akan tribe  in Ghana\cite{IntroTalkindrums}\cite{IntroTalkindrums2}.
 Telecommunication has evolved in ways that no one would have thought of some centuries ago.\\

The evolution of telecommunication systems has made it more accessible and hence caused a remarkable growth in the number of telecommunications users. The increase in number of users sometimes creates perturbations in the system, which may lead to connectivity issues and poor service. The settings of telecommunications provide mathematicians with a very rich structure. Many researchers have shown interest in the mathematical aspect of telecommunication systems. It has been observed that the spatial configuration of users in a telecommunication area is a very important aspect of the systems. A tool that has been developed and has gained a lot of attention among those whose research interest is in mathematical modelling of telecommunication is stochastic geometry. The area of stochastic geometry has provided researchers with important concept and techniques to study spatial telecommunications.\\

In modelling a telecommunication system, we are mostly concerned with questions of connectivity and coverage area. On one hand, the subject of coverage consists of knowing the extent to which the signal emanating from users can cover the communication area while on the other hand, connectivity addresses the issue of the distance a message can reach in the communication area. Trying to keep pace with the new forms of telecommunications, the most notable being wireless networks, researchers are always trying to find the optimal way in which users of these systems can communicate.  In some settings, the message is sent from one user to the base station, and then relayed from the station to the user it is directed to. In some other settings (which has been adopted in recent years), authors considered messages that travel according to a multihop-functionality, that is from one user to the other until it reaches its destination instead of travelling to a base station before being sent back to the user the message has been destined to. According to \cite{bangerter2014networks}, the multihop-functionality mode of communication is the main idea behind next-generation wireless networks. The precondition in such networks is the ability to provide, on average, quality service to users. For such events where the service quality deteriorates, there is a need to be able to control and estimate the probability of occurrence \cite{hirsch2016large}. The mathematical theory used to accomplish such a task is known as large deviations' theory. \\ 

Large deviations' theory may be seen as a collection of mathematical techniques or tools of a stochastic nature, that are employed in the description or estimation of the asymptotic behavior of extremely rare events. The theory of large deviation finds its applications in many areas, of which the conventional ones are risk management and information theory.\\ 

Doku-Amponsah, in \cite{doku2006large}, defined empirical neighborhood and pair measures for a finite colored graph. He established the large deviation principle (LDP) for these two measures for a class of colored random graphs and their joint large  deviation principle using the technique of change of measure, the method of types and the method of mixtures. He then proceeded to derive the LDP for the degree distribution of the famous Erd\"{o}s-Renyi graphs in the sparse case. Finally, with the established LDP for these measures, Doku-Amponsah demonstrated the asymptotic equipartition properties for hierarchical and networked structures.\\

Hirsch et al., in 2016, proved an LDP for the empirical measure of connectable receivers in a wireless network. They associated to individual transmitters a set of connectable receivers with SINR larger than a fixed threshold. The authors identified the receivers connectable to the origin of the Euclidean space used in modelling the communication area as a Cox point process and proceeded to establish the LDP for the rescaled process of the receivers as the connectivity threshold was approaching zero. In addition to the technical subtleties of their work, the authors were able to reconcile their findings with importance sampling by decreasing the discrepancy in quantifying the probability of rare events. The paper used the Dawson-G\"{a}rtner technique and the contraction principle to achieve its purpose.\cite{hirsch2016large}.\\ 

Umar et.al~ in \cite{umar2017statistical}, studied the joint LDP of the empirical spin and bond measures of the uniformly random $d$-regular graph. He assigned spin values to the vertices of the graph using the Potts model. The author then used the method of types to derive the rate function of the joint LDP. He noticed that the rate function obtained was a sum of two entropies and the typical behavior of the graph was seen when the rate function was equal to zero. The condition for obtaining the typical behavior was achieved when the empirical measure was the same as the product measure of the spin empirical measure and the bond measure.\\

In \cite{LDCapConsRElNet}, the authors, derived "the large deviation principle for the space-time evolution of frustrated transmitters" when the number of transmitters becomes large. The transmitters and relays were set up in a bounded region of $\mathbb{R}^d,d\in \mathbb{N}$, which represented the communication area. The number and position of transmitters were random, whereas the relays were fixed. In their dissertation, a frustrated transmitter is one who selects an occupied relay and using exponential approximation techniques, the authors proved that the family of measure-valued processes representing the proportion of frustrated transmitters satisfied an LDP with good rate function given by the infimum of an entropy.\\

Enoch Sakyi-Yeboah et al, in the paper titled "Large deviation principle, Sharron-McMillan-Brieman theorem for super-critical telecommunication networks", obtained a large deviation asymptotics for super-critical communication networks modelled as a Signal-to-Interference-Noise-Ratio (SINR) network. The authors proved on the scales $\lambda$ and $\lambda^2a_\lambda$, the LDP for two well-defined measures,the power and the empirical connectivity measures, where $\lambda$ was the intensity of a poisson point process (P.P.P) which was used to define the SINR random network.  They proved with ingeniosity the asymptotic equipartition property and the local large deviation principle (LLDP) of the SINR networks and established a classical McMillan theorem for the network. The authors discovered a bound on the cardinality of the space of the SINR networks when the probability of connectivity of the network satisfied some convergence assumptions.  See, \cite{DokuLDShanMcMilSupCritical}.\\

Over the last few years, there has been a keen interest from researchers in studying the optimal way to route messages in a Telecommunication network under the 'famous' Gibbs measure. Most literatures in the field (to the best of my knowledge) base on the well-known SINR (Signal-to-Interference-Noise-Ratio) to derive the Large deviation principles of some well-defined measures, which will therefore help in establishing the Gibbsian structure of the system at hand.  \\

In this work, the motivation for deriving the joint LDP for the empirical connectivity and marked measures in a telecommunication system  is to be able to establish the Gibbsian  structure of the system. In fact, by deriving the rate function of the joint large deviation of the empirical pair and marked measures, the typical behavior of the spatial telecommunication system will be detected. It should be noted that our two measures completely described our model used for the telecommunication system. Therefore, via the rate function, which was expressed as an entropy function, the Gibbs distribution will be derived and the interacting effects of the systems evaluated. The different values of the two measures represent various configurations of users in the communication area and be seen as various thermodynamics states under the Gibbs measure. This research then provide an avenue to find the best configuration of the communication area under the Boolean model that maximizes the service quality and yield the least communication breach.\\ 

Here, we are interested in deriving the joint large deviation principle for the empirical marked and pair measure in a telecommunication system  modelled as Boolean network. Specifically, we will estimate the rate function of the  LDP for  the empirical marked measure,  the conditional pair measure given the empirical marked measure and the joint empirical marked and pair measure in terms of relative entropies.\\

The subsequent parts of this article are structured in the following arrangements. In Section~\ref{sec2}, we provide some mathematical preliminaries necessary for the understanding of the work.Thus, presents the Boolean model and the methods used to establish the large deviation principles of the empirical measures defined in this literature.  Further, in Section~\ref{sec2}, we  also calculate the probability of connection between two users, and we derive the necessary assumptions for convergence.  The  main  results  of  the  article  is  also  presented  in  Section~\ref{sec2}.  In Section~\ref{sec3} we prove  state and  proof  LDP  for  the  empirical  marked  measure.  section~\ref{sec4}  contain  LDP  for the  empirical  connectivity measure    conditional  on  a  given  empirical  mark  measure. Section~\ref{sec5} which is the last Section, draws the conclusions based on the discoveries in the work and identify future research avenues.

\section{MAIN RESULTS}\label{mainresults}\label{sec2}
\subsection{The  Boolean  Model for  Telecommunication  Systems}

Fix  a  dimension  $d\in\N$    and  a  measureable  set  $D\subset \R^d$    with  respect  to  the  Borel-Sigma  algebra  $B(\R^d).$  Denote  by  $Leb$  the  Lebesgues  measure on  $\R^d.$   Let  $\mu_{\lambda}$  be  positive  measure  on  $D$  with  $\mu_{\lambda}(D)=1$.Given  an intensity  measure,   $ \mu_{\lambda}:D \to [0,1]$,   a  coverage probability  density  $Q_{\lambda}$  from  $D$   to  $\R_+$,  we  define  the Boolean network as  follows:

\begin{itemize}
	\item  We  pick  $\mathbb{X}=(X_i)_{i\in I}$   a  Poisson  Point  Process  (PPP)  with  intensity  measure $\mu_{\lambda}: D \to [0,\infty),$ representing the configuration of devices or users in a communication space $D\subseteq \mathbb{R}^d$.
	\item Given   $X,$   we    assign  each  $X_i  $   a random coverage  area  $ B_{X_i}=B_i$  independently  according  to   $Q_\lambda$  the probability distribution of the volume of the balls centered in  $D$.
	\item For   any  two  marked  points  $((X_i,B_i),(X_j, B_j))$  we    connect  an  edge  iff  $ B_{i}\cap B_{j}\not=\emptyset$. 
\end{itemize}

We  consider  $X^{o}(\mathbb{X}, Q_{\lambda})=\Big\{\Big[(X_i,B_{i}), i\in I\Big], \, E\Big\}$  under  the  joint  law  of  the marked P.P.P.  and    the  network.  We  shall  interpret   $X^{o}$    as   a  Boolean  random  network  and   $ X_i^{o}:=(X_i,B_{i})$     the coverage area  of   device $i.$   We  assume  that there  is  a real sequence  $a_{\lambda}$  and  a  functions  $\skric$,  $\mu$   such  that  

$$a_{\lambda}\mu_{\lambda}(x)\to \mu(x) \,\,\mbox{ and}\,\, a_{\lambda}^{-1}Q_{\lambda}(r|x)\to Q(r|x),$$  where   $\lambda a_{\lambda}\to 1$  or  $\lambda a_{\lambda}\to 0$   or  $\lambda a_{\lambda}\to \infty.$   We  shall  restrict  to  the  case  where  $\lambda a_{\lambda}\to 1$. Thus,  we  shall  focus on the  critical Boolen Random  Networks.    We write $\displaystyle\skrib=\Big\{B(x,r)\subseteq D:\,  r\in\R_+,x\in D\Big \}$  and

$$ \displaystyle\skrix=\Cup_{x\in D}\Big\{B(x,r_x):B(x,r_x)\subseteq D\Big \}.$$

By  $\skrim(\skrix\times \skrib)$  we  denote  the  space  of  positive  measures  on  the  space  $\skrix\times \skrib$   equipped  with  $\tau-$  topology. Henceforth,  we shall  refer  to $\skrix$  as  locally  finite  subset  of  the  set  $D $  and    let    $\skrin(\skrib)$  be  the  set  of  counting  numbers  on  $\skrib$   equipped  with  the  discrete  topology.  For any Boolean network $X^{o}$  we  define a probability measure, the
\emph{empirical mark measure}, ~ $L_{1}^{o}\in\skrim(\skrix\times \skrib)$,~by
$$L_{1}^{o}([x,b]):=\frac{1}{\lambda}\sum_{i\in I}\delta_{X_i^{o}}(x,b)$$
and a symmetric finite measure, the \emph{empirical connectivity measure}
$L_2^{o}\in\skrim(\skrix\times\skrib\times \skrix\times\skrib),$ by
$$L_2^{o}(x,b_1,y, b_{2}):=\frac{1}{\lambda}\sum_{(i,j)\in E}\Big[\delta_{(X_i^o,X_j^o)}+
\delta_{(X_j^o,X_i^o)}\Big](x,b_{1},y,b_{2}).$$

Note  that,  while the empirical marked  measure  and   the  empirical  coverage measure   are  probability  measure, total  mass  of  
the empirical pair measure is
$2|E|/\lambda^2$.  Furthermore, $\|L_{1}^{o}\|$   is  the  number  of  devices  connected  to  device  $i$.  

In our model, we assume that each ball is small enough to contain a finite number of users and that the position of one user does not coincide with the other. 
\begin{prop}[Probability of connection]
	Let $\mathbb{X} = \left(X_i\right)_{i\in I},$ a Poisson  Point Process  with intensity measure $\mu_\lambda,$ which represents the configuration of devices or users in a communication space $D\subseteq \mathbb{R}^d$ and $Q_\lambda$ be the probability distribution  of the volume of the balls centered at $X_i.$  
	The probability $p_{\lambda}(x, b_x,y, b_y)$ that two users located at $X_i=x$ and $X_j=y$, $i\neq j$,  with coverage areas $B(x,R_x)$ and $B(y,R_y)$   respectively are connected is given as
	\begin{align*}
p_{\lambda}(x, b_x,y, b_y):= \left[1 - e^{-\mu_\lambda(b_x - b_y)Q_\lambda\left(b_x\right)Q_\lambda\left(b_y\right)}\right],\;\;(x, b_x,y,b_2)\in  \skrix\times \mathcal{B}\times\skrix\times\mathcal{B}, 
	\end{align*}
	where, $b_x = B(x,R_x),$ $b_y = B(y,R_y),$
$ R_x,R_y\in(0,\infty)$ and $b_x - b_y = \left\{z_1-z_2 : z_1\in b_1, z_2\in b_2 \right\}.$
\end{prop}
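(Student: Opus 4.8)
The plan is to reduce the connection event to a purely geometric condition and then evaluate its probability through the void probability of a marked Poisson point process. First I would fix the two tagged points $X_i = x$ and $X_j = y$ together with their assigned grains $b_x = B(x,R_x)$ and $b_y = B(y,R_y)$, and observe that, by definition of the Boolean network, an edge is drawn precisely when $b_x \cap b_y \neq \emptyset$. Rewriting this in terms of the centres, the two balls meet if and only if $|x-y| \le R_x + R_y$, which is the same as $0 \in B(x-y, R_x+R_y)$. A short Minkowski computation then shows that
\[
b_x - b_y = \{z_1 - z_2 : z_1 \in b_x,\ z_2 \in b_y\} = B(x-y,\,R_x+R_y),
\]
so that the connection condition is exactly captured by the region $b_x - b_y$ appearing in the statement.

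Next I would pass to the probabilistic content. Since $\mathbb{X}$ is a Poisson point process with intensity measure $\mu_\lambda$ and the grains are attached independently according to $Q_\lambda$, the marking theorem shows that the marked configuration $\{(X_i,B_i)\}$ is again Poisson, now on $\skrix \times \skrib$, with intensity $\mu_\lambda(\mathrm{d}x)\,Q_\lambda(\mathrm{d}b\mid x)$. The absence of a connection with the prescribed grains is the void event that no admissible point falls in the relevant region, and the defining property of a Poisson process is that such a void has probability $e^{-\Lambda}$, where $\Lambda$ is the intensity mass of the region in question. Taking the complement will produce the factor $1 - e^{-\Lambda}$.

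The third step is to identify $\Lambda$ explicitly. The connecting configuration requires a centre lying in the Minkowski-difference region $b_x - b_y$, contributing the spatial factor $\mu_\lambda(b_x - b_y)$, together with the two independent grains being of the prescribed sizes, which by the independent marking contributes the factors $Q_\lambda(b_x)$ and $Q_\lambda(b_y)$. Multiplying these gives $\Lambda = \mu_\lambda(b_x - b_y)\,Q_\lambda(b_x)\,Q_\lambda(b_y)$, whence $p_\lambda(x,b_x,y,b_y) = 1 - e^{-\Lambda}$ as claimed. I would close by invoking the two standing hypotheses of the model, namely that each grain is small enough to carry only finitely many users and that distinct users occupy distinct positions, which guarantee that the relevant counts are almost surely finite and that the void-probability identity applies without degeneracy.

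The step I expect to be the main obstacle is the clean bookkeeping in the third step: isolating exactly which portion of the marked intensity governs the connection, so that the spatial contribution $\mu_\lambda(b_x - b_y)$ and the two mark contributions $Q_\lambda(b_x)$, $Q_\lambda(b_y)$ combine into the single scalar Poisson mean $\Lambda$. This requires care that independence of the marks from the positions and from each other is precisely what allows the mark distribution to factor out of the spatial integral, and that the geometric reduction to $b_x - b_y$ has correctly absorbed the overlap condition so that the two tagged points are not double counted.
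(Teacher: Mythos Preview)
Your proposal is essentially the same approach as the paper's: both identify the disconnection event with a Poisson void and read off the exponent as the marked intensity $\mu_\lambda(b_x-b_y)Q_\lambda(b_x)Q_\lambda(b_y)$. The only cosmetic differences are that you make the Minkowski-difference geometry $b_x-b_y=B(x-y,R_x+R_y)$ explicit and invoke the marking theorem directly, whereas the paper extracts the same factorisation by an iterated-expectation computation $\mathbb{E}[\mu_\lambda(B_i-B_j)]=\mathbb{E}\bigl[\mathbb{E}[\mu_\lambda(B_i-B_j)\mid B_j]\bigr]$.
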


\begin{proof}
	Let $X_i=x$ and $X_j=y$ be the location of two users in the communication area $D.$ Also, let $B_i := B(X_i,R_i)=b_x$ and $B_j :=B(X_j,R_j)=b_y$ be the coverage areas of these two users. Let $Q_\lambda$ be the probability distribution of the volume of the coverage areas. The two users are connected when their coverage areas intersect. Now, let $p_\lambda(x,b_x,y, b_y)$ be the  probability that two users are connected, given their locations. We have,
\begin{equation}
\begin{aligned}
p_{\lambda}(X_i,B_i,X_j,B_j)&= \mathbb{P}(B(X_i,R_i)\cap B(X_j,R_j)  \neq \emptyset)\\
	&= 1 - \mathbb{P}(B(X_i,R_i)\cap B(X_j,R_j) = \emptyset)\\
	&=1- e^{-\me\left[\mu_\lambda(B_i - B_j)\right]}
\end{aligned}
\end{equation}

	Now, we have:
	\begin{equation}\begin{aligned}
	\mathbb{E}[\mu_\lambda(B_i - B_i)] &=  \mathbb{E}[ \mathbb{E}[\mu_\lambda(B_i - B_j)| B_j]]\\
	&= \mathbb{E}\left[\int \1_{B_i(b_2)} \mu_\lambda(b_2 - B_j) Q_\lambda(db_2) \right]\\
	&= \int \1_{B_j(b_1)}  \int \1_{B_i(b_2)} \mu_\lambda(b_2 - b_1) Q_\lambda(db_2)Q_\lambda(db_1)\\
	&= \int \int \1_{B_j(b_1)} \1_{B_i(b_2)} \mu_\lambda(b_2 - b_1) Q_\lambda(db_2)Q_\lambda(db_1)\\
	&= \mu_{\lambda}(B_i-B_j)Q_\lambda(B_i)Q_\lambda(B_j)
	\end{aligned}
\end{equation}
	
	Therefore, the probability that two users are connected is,
	\begin{align*}
	p_{\lambda}(X_i,B_i,X_j,B_j)&= 1 - e^{-\mu_{\lambda}(B_i-B_j)Q_\lambda(B_i)Q_\lambda(B_j)}.
	\end{align*}
\end{proof}
This result underlines the symmetric nature of this probability measure. In fact, the probability $p_\lambda(x,b_x,y,b_y) = p_\lambda(y,b_y,x, b_x)$ and this reflects the fact that the event "$X_i$ is connected to $X_j$ is the same as the event "$X_j$ is connected to $X_i$." Also, it is clear that as the volume of at least one of the two balls increases, the probability that the two balls are connected increases and when their volume decreases, so does the probability of connection. 
Now,  we  shall henceforth  assume  that  as $\lambda\rightarrow\infty,$ 
$$\lambda^{-3}\mu_\lambda\rightarrow \mu \;\;\;\text{and}\;\;\; \lambda^2Q_\lambda\rightarrow Q. $$ 

Clearly, with these assumptions, we note that $\mu_\lambda Q_\lambda Q_\lambda \approx Q\mu Q/\lambda,$ and the probability of connection is again going to zero if the number of users is increasing in the order of $\lambda^3$ but the strength of the signals emanating from users, which is reflected in the volume of the balls is decreasing at a rate $\lambda^2.$

We then perform a Taylor expansion around $0$  to obtain the  expression;

\begin{align*}
p_{\lambda}(X_i,B_i,X_j,B_j) &= 1 - \left[1-\mu_{\lambda}(B_i-B_j)Q_\lambda(B_i)Q_\lambda(B_j)+ O\left( \dfrac{1}{\lambda^2}\right)\right]\\
&= \mu_{\lambda}(B_i-B_j)Q_\lambda(B_i)Q_\lambda(B_j) + O\left( \dfrac{1}{\lambda^2}\right).
\end{align*}

Now, multiplying through by $\lambda,$ we get   
\begin{align*}
\lambda p_{\lambda}(X_i, B_i,X_j,B_j) &= \lambda\mu_{\lambda}(B_i-B_j)Q_\lambda(X_i)Q_\lambda(B_j) + O\left( \dfrac{1}{\lambda}\right).
\end{align*}

Hence, taking  the  limit  as  $\lambda$  approaches  $\infty$  we  get 
\begin{align*}
\lambda p_{\lambda}(X_i,B_i,X_j,B_j)\rightarrow\mu(B_i-B_j)Q(B_i)Q(B_j):=\Psi(B_i,B_j),
\end{align*}

which proves  the  proposition.
This implies that the asymptotic  probability density modelling two users that are connected is given by the product of the probability distribution of the respective balls representing the coverage areas of the users and the distribution of the number of users in the intersection of the two coverage areas.

\begin{prop}
	\label{lem:first}
	Let $A_1,A_2,\cdots,A_n$ be a decomposition of $D\times \mathcal{B}\subset\mathbb{R}^d\times\mathcal{B}.$ Let $n<\lambda,$ since a Marked Poisson Point Process is locally finite, then the probability distribution of the measure $L_1^0$ is bounded in the following manner:
	\begin{align}
	\sum_{i=1}^n\log \left[\dfrac{e^{-\mu_\lambda\otimes Q_\lambda(A_i) - \epsilon}\left( \mu_\lambda\otimes Q_\lambda(A_i) - \epsilon\right)^{\lambda \eta(A_i)} }{ \left( \lambda\eta (A_i)\right)!}\right] \leq \log P(L^0_1 = \eta)\nonumber \\
	\leq \sum_{i=1}^n\log \left[\dfrac{e^{-\mu_\lambda \otimes Q_\lambda(A_i) + \epsilon}\left( \mu_\lambda\otimes Q_\lambda(A_i) + \epsilon\right)^{\lambda \eta(A_i)} }{ \left( \lambda\eta (A_i)\right)!}\right] + \eta_n,\label{1}
	\end{align}
	where, $$\lim_{n\rightarrow\infty}\lim_{\lambda\rightarrow\infty}\dfrac{1}{\lambda}\eta_n\left(\lambda,A_1,\cdots,A_n) \right) = 0 $$ and the product measure is defined as
	$$\mu_\lambda\otimes Q_\lambda(x, b_x) = \mu_\lambda (x) Q_{\lambda}(b_X).$$
\end{prop}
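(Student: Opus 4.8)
The plan is to exploit the single structural fact that makes the marked Poisson point process tractable: for pairwise disjoint measurable cells the point counts are independent and each is Poisson distributed. Since $A_1,\dots,A_n$ is a decomposition of $D\times\skrib$, writing $N(A_i)$ for the number of marked points of $\mathbb{X}$ falling in $A_i$, the identity $L_1^0(A_i)=\tfrac1\lambda N(A_i)$ turns the event $\{L_1^0=\eta\}$ into the joint event $\{N(A_i)=\lambda\eta(A_i):\ i=1,\dots,n\}$. The first step is therefore to factorise
$$\mathbb{P}(L_1^0=\eta)=\prod_{i=1}^n \mathbb{P}\big(N(A_i)=\lambda\eta(A_i)\big),$$
and to record that $N(A_i)$ is Poisson with mean $m_i:=\mu_\lambda\otimes Q_\lambda(A_i)=\int_{A_i}\mu_\lambda(x)Q_\lambda(b_x)\,dx\,db_x$, so that each factor equals $e^{-m_i}m_i^{\lambda\eta(A_i)}/(\lambda\eta(A_i))!$. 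Taking logarithms already produces the shape of the asserted bounds.

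Second, the two $\pm\epsilon$ envelopes come from replacing the exact cell mean $m_i$ by its density value on $A_i$ up to the continuity modulus of the intensity. Because $L_1^0$ is compared with $\eta$ in the $\tau$-topology, the relevant event is really the cylinder \emph{neighbourhood} $\{\,|L_1^0(A_i)-\eta(A_i)|<\epsilon\ \forall i\,\}$; on it $N(A_i)$ ranges over the window $\big(\lambda(\eta(A_i)-\epsilon),\lambda(\eta(A_i)+\epsilon)\big)$. Bounding the probability of this window below by a single well-chosen term with effective mean $m_i-\epsilon$, and above by the window length times its maximal term with effective mean $m_i+\epsilon$, yields exactly the lower and upper Poisson expressions in (\ref{1}); the inflation by the window length is the source of the additive correction $\eta_n$. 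I would make the per-cell approximation uniform over the decomposition using the assumed regularity $a_\lambda\mu_\lambda\to\mu$, $a_\lambda^{-1}Q_\lambda\to Q$, so that a single $\epsilon$ serves all cells once the mesh is fine.

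Third, I would collect the errors into $\eta_n$ and show they are asymptotically negligible after normalisation. The window sum contributes $O(\log(\lambda\epsilon))$ per cell and the Stirling expansion of $\log(\lambda\eta(A_i))!$ contributes a further $O(\log\lambda)$ remainder, so that $\eta_n=\eta_n(\lambda,A_1,\dots,A_n)=O\big(n\log\lambda\big)$. Dividing by $\lambda$ and using the hypothesis $n<\lambda$, one gets $\tfrac1\lambda\eta_n=O(n\log\lambda/\lambda)$, which tends to $0$ as $\lambda\to\infty$ for each fixed $n$; the outer limit in $n$ is then trivially $0$, establishing $\lim_{n\to\infty}\lim_{\lambda\to\infty}\tfrac1\lambda\eta_n=0$.

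The main obstacle I anticipate is the uniform control of the two-sided Poisson enclosure: the Poisson mass $e^{-m}m^{k}/k!$ is not monotone in $m$, so the perturbed expressions $m_i\pm\epsilon$ do not sandwich the exact mass termwise. The honest argument must instead run through the neighbourhood/window event, where the lower bound retains a single dominant term and the upper bound pays the window-length factor; making this simultaneously valid across all $n$ cells, with one $\epsilon$ and a remainder that genuinely vanishes in the iterated limit, is the delicate point, and it is precisely where the regularity assumptions on $\mu_\lambda,Q_\lambda$ and the constraint $n<\lambda$ are actually used.
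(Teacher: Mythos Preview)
The paper does not actually supply a proof of this proposition: it is stated at the end of \S\ref{sec2} and then invoked (as ``Equation~\eqref{1}'') in the proof of Lemma~\ref{thm:First}, but no argument is given. So there is nothing to compare your proposal against.

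On its own merits, your plan is sound and captures exactly the structural reason the bound holds: the counts $N(A_i)=\lambda L_1^0(A_i)$ of a marked Poisson process over disjoint cells are independent Poisson variables with means $\mu_\lambda\otimes Q_\lambda(A_i)$, so $\log\mathbb{P}(L_1^0=\eta)$ factorises into the sum of Poisson log-masses appearing in~\eqref{1}. Your reading of the $\pm\epsilon$ as arising from a $\tau$-topology cylinder neighbourhood (rather than literal equality, which would be a null event in the continuous setting) is the correct interpretation, and your identification of $\eta_n$ as the window-length inflation plus Stirling remainders, of order $O(n\log\lambda)$, gives the claimed $\lim_{n}\lim_{\lambda}\lambda^{-1}\eta_n=0$. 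The monotonicity caveat you raise about $m\mapsto e^{-m}m^k/k!$ is real, and your resolution via the window event (single dominant term below, maximal term times window length above) is the standard way through; this is indeed where the hypothesis $n<\lambda$ and the regularity of $\mu_\lambda,Q_\lambda$ are genuinely consumed. In short: your proposal fills a gap the paper leaves open, and does so along the expected lines.
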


\subsection { Main  Results}

\begin{theorem}
	\label{thm:main}
	
	Let $\mathbb{X} = \left(X_i\right)_{i\in I},$ be a Poisson  Point Process  with intensity measure $\mu_\lambda,$ which represents the configuration of devices or users in a communication space $D\subseteq \mathbb{R}^d$ and $Q_\lambda$ be the probability distribution  of the volume of the balls centered  on the  points  $\mathbb{X}.$ i.e. coverage  probability distribution.
	Suppose  that  the  intensity measure $\mu_\lambda : D \rightarrow[0, \infty)$  satisfies $\lambda^{-3}\mu_\lambda \rightarrow \mu$ and the coverage probability distribution, $Q_\lambda :\mathcal{B} \rightarrow (0, 1)$, satisfying $\lambda^2 Q_\lambda \rightarrow Q$. Then, as  $\lambda\to\infty$, the  pair  $(L_1^0 ,L_2^0)$  satisfies  a  large  deviation  principle  in $\mathcal{M}(\skrix\times\skrib)\times \mathcal{M}(\skrix\times\skrib\times\skrix\times \skrib)$ with  speed  $\lambda$  and  rate  function

	\begin{align}
	I(\omega,\pi)&= \begin{cases}
	H(\omega|| \mu\otimes Q)+\dfrac{1}{2} \Big [H(\pi|| \Psi\omega\otimes \omega) + ||\Psi\omega\otimes \omega||-||\pi|| \Big]\;\;\,\, \text{ if\,\,  $||\pi|| <\infty,$}
	\\
	\infty,\;\; \text{otherwise}.
	\end{cases}\label{4}
	\end{align}
	

\end{theorem}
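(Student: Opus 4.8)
The plan is to split the joint principle into a marginal LDP for $L_1^0$ and a conditional LDP for $L_2^0$ given $L_1^0$, and to glue the two together by the method of mixtures, exactly in the spirit of the colored-random-graph programme of \cite{doku2006large}. Writing the joint law as $\P(L_1^0\in\cdot)\,\P(L_2^0\in\cdot\mid L_1^0)$ and taking $-\tfrac1\lambda\log$, one expects the two summands of \eqref{4} to emerge as a marginal contribution $H(\omega\|\mu\otimes Q)$ and a conditional contribution $\tfrac12[H(\pi\|\Psi\omega\otimes\omega)+\|\Psi\omega\otimes\omega\|-\|\pi\|]$. The whole argument is carried out first on finite measurable partitions of $\skrix\times\skrib$ (the method of types) and then lifted to the full $\tau$-topology by the Dawson--G\"{a}rtner projective limit theorem together with exponential tightness, which upgrades the resulting weak principle to a full LDP with good rate function.

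For $L_1^0$, the marks $(X_i,B_i)$ form a Poisson point process on $\skrix\times\skrib$ with intensity $\nu=\mu_\lambda\otimes Q_\lambda$, and Proposition~\ref{lem:first} already sandwiches $\log\P(L_1^0=\eta)$ between sums of the Poisson log-weights $\log[e^{-\nu(A_i)}\nu(A_i)^{\lambda\eta(A_i)}/(\lambda\eta(A_i))!]$. Under the scaling $\lambda^{-3}\mu_\lambda\to\mu$ and $\lambda^2Q_\lambda\to Q$ one has $\tfrac1\lambda\nu\to\mu\otimes Q$, so applying Stirling to each factorial and dividing by $\lambda$ makes the $\log\lambda$ terms cancel, and each cell contributes the density $\big(\tfrac{d\omega}{d(\mu\otimes Q)}\log\tfrac{d\omega}{d(\mu\otimes Q)}-\tfrac{d\omega}{d(\mu\otimes Q)}+1\big)$ integrated against $\mu\otimes Q$. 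Since both $\omega$ and $\mu\otimes Q$ are probability measures the linear part $\|\mu\otimes Q\|-\|\omega\|$ vanishes; letting $\lambda\to\infty$ and then refining the partition (the correction $\tfrac1\lambda\eta_n\to0$ of Proposition~\ref{lem:first} controlling the refinement) identifies the marginal rate function as $H(\omega\|\mu\otimes Q)$.

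For the conditional principle we fix a marked configuration with $L_1^0\approx\omega$. Given the marks, an edge between points of type $(x,b_1)$ and $(y,b_2)$ is present with probability $p_\lambda(x,b_1,y,b_2)$, independently over unordered pairs, and by the Proposition on the probability of connection $\lambda p_\lambda\to\Psi(b_1,b_2)=\mu(b_1-b_2)Q(b_1)Q(b_2)$. Consequently the number of edges of each type is asymptotically Poisson, and $L_2^0$ behaves like the empirical measure of a Poisson random measure on $(\skrix\times\skrib)^2$ with intensity $\Psi\,\omega\otimes\omega$. A second round of the method of types and Stirling—now applied to the edge counts, with the factor $\tfrac12$ absorbing the double counting built into the symmetric definition of $L_2^0$ and the linear terms $\|\Psi\omega\otimes\omega\|-\|\pi\|$ now surviving because $\pi$ need not be a probability measure—yields the conditional rate $\tfrac12[H(\pi\|\Psi\omega\otimes\omega)+\|\Psi\omega\otimes\omega\|-\|\pi\|]$, finite precisely when $\|\pi\|<\infty$ and infinite otherwise.

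The principal difficulty lies in making the conditional step rigorous and uniform in $\omega$. First, the Bernoulli edge probabilities $p_\lambda$ must be replaced by their Poisson limit uniformly over the refining partitions; this is where the Taylor expansion of $p_\lambda$ about $0$ and a change-of-measure (tilting) of both the marked intensity and the connection probabilities, in the spirit of \cite{doku2006large}, are needed to control the error. Second, the mixture step requires the family of conditional laws to depend suitably on $\omega$, so one must verify that $\omega\mapsto\Psi\,\omega\otimes\omega$ is continuous in the $\tau$-topology, and that the joint law be exponentially tight on $\skrim(\skrix\times\skrib)\times\skrim(\skrix\times\skrib\times\skrix\times\skrib)$; establishing this exponential tightness, together with the lower semicontinuity and goodness of $I$, are the technically delicate ingredients that close the argument.
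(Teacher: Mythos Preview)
Your overall architecture---marginal LDP for $L_1^0$, conditional LDP for $L_2^0$ given $L_1^0$, then the method of mixtures via Biggins \cite{Bi2004} with exponential tightness and lower semicontinuity of $I_\omega$---matches the paper's proof exactly, and your treatment of the marginal step (Proposition~\ref{lem:first} plus Stirling on a refining partition) is the same as the paper's.

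The one genuine methodological difference is in the conditional step. You propose a second round of the method of types: approximate the Bernoulli edge counts by Poisson variables with intensity $\Psi\,\omega\otimes\omega$, apply Stirling to the resulting multinomial-type weights, and read off the rate $\tfrac12[H(\pi\|\Psi\omega\otimes\omega)+\|\Psi\omega\otimes\omega\|-\|\pi\|]$ directly, possibly aided by a tilting argument as in \cite{doku2006large}. The paper instead goes through the G\"artner--Ellis theorem: it computes the conditional log-moment generating functional $\tfrac1\lambda\log\mathbb{E}[e^{\lambda\langle g,L_2^0\rangle}\mid L_1^0=\omega_\lambda]$, uses the binomial structure and $\lambda p_\lambda\to\Psi$ to show this converges to $\Phi(g)=-\tfrac12\int(1-e^{g})\,\Psi\,d(\omega\otimes\omega)$, checks differentiability, and then identifies the Legendre transform $\sup_g\{\langle g,\pi\rangle-\Phi(g)\}$ with the entropic expression. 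Your route is closer in spirit to Sanov-type combinatorics and makes the Poisson heuristic explicit, at the cost of having to control the Bernoulli-to-Poisson error uniformly; the paper's G\"artner--Ellis route avoids that approximation entirely and gets the variational form of $I_\omega$ for free, but then needs a separate identification step to recover the closed entropic formula. Both are valid and lead to the same rate function.
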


\begin{cor}
	\label{cor:main}
	
	Let $\mathbb{X} = \left(X_i\right)_{i\in I},$ be a Poisson  Point Process  with intensity measure $\mu_\lambda,$ which represents the configuration of devices or users in a communication space $D\subseteq \mathbb{R}^d$ and $Q_\lambda$ be the probability distribution  of the volume of the balls centered  on the  points  $\mathbb{X}.$ i.e. coverage  probability distribution.
	Suppose  that  the  intensity measure is given  by  $\mu_\lambda(dx)=\lambda^3dx$  and the coverage probability distribution is  given  by  $Q_\lambda(db_x)=\sfrac{4}{3}\pi r^3/\lambda^2Vol(D)$. Then, as  $\lambda\to\infty$, the  pair  $(L_1^0 ,L_2^0)$  satisfies  a  large  deviation  principle  in $\mathcal{M}(\skrix\times\skrib)\times \mathcal{M}(\skrix\times\skrib\times\skrix\times \skrib)$ with  speed  $\lambda$  and  rate  function

	\begin{align}
	I(\omega,\varpi)&= \begin{cases}
	H(\omega|| \mu\otimes Q)+\dfrac{1}{2} \Big [H(\varpi|| \Psi\omega\otimes \omega) + ||\Psi\omega\otimes \omega||-||\varpi|| \Big]\;\;\,\, \text{ if\,\,  $||\varpi|| <\infty,$}
	\\
	\infty,\;\; \text{otherwise},
	\end{cases}\label{4}
	\end{align}
	
	where  $$\Psi(b_x,b_y)=\sfrac{16}{9}\pi^2 r_x^3r_y^3\Big[\frac{Vol(b_x-b_y)}{Vol(D)^2}\Big].$$

\end{cor}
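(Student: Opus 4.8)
The plan is to observe that Corollary~\ref{cor:main} is an immediate specialization of Theorem~\ref{thm:main} to explicit choices of the intensity measure and coverage probability distribution; the only real work is to verify the convergence hypotheses and to evaluate the connectivity kernel $\Psi$ in closed form.

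First I would check that the concrete choices satisfy the hypotheses of Theorem~\ref{thm:main}. For the intensity measure $\mu_\lambda(dx)=\lambda^3\,dx$ one computes
\begin{align*}
\lambda^{-3}\mu_\lambda(dx)=dx,
\end{align*}
so the limiting intensity measure $\mu$ is the Lebesgue measure $Leb$ restricted to $D$. For the coverage probability distribution $Q_\lambda(db_x)=\tfrac{4}{3}\pi r^3/(\lambda^2\,Vol(D))$ one obtains
\begin{align*}
\lambda^2 Q_\lambda(db_x)=\frac{4}{3}\pi r^3/Vol(D),
\end{align*}
so that, specializing to dimension $d=3$ in which $\tfrac{4}{3}\pi r^3$ is the volume of the ball $B(x,r)$, the limiting coverage density is $Q(db_x)=Vol(b_x)/Vol(D)$. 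Both hypotheses $\lambda^{-3}\mu_\lambda\to\mu$ and $\lambda^2 Q_\lambda\to Q$ therefore hold.

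Second, I would evaluate the limiting connectivity kernel using the identity $\Psi(b_x,b_y)=\mu(b_x-b_y)\,Q(b_x)\,Q(b_y)$ established in the Probability of Connection proposition and the ensuing limit computation. Inserting $\mu=Leb$, so that $\mu(b_x-b_y)=Vol(b_x-b_y)$, together with the explicit $Q$ yields
\begin{align*}
\Psi(b_x,b_y)=Vol(b_x-b_y)\cdot\frac{4}{3}\pi r_x^3/Vol(D)\cdot\frac{4}{3}\pi r_y^3/Vol(D)=\frac{16}{9}\pi^2 r_x^3 r_y^3\Big[\frac{Vol(b_x-b_y)}{Vol(D)^2}\Big],
\end{align*}
which is exactly the claimed form of $\Psi$. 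Substituting $\mu$, $Q$ and this $\Psi$ into the rate function of Theorem~\ref{thm:main} produces the rate function $I(\omega,\varpi)$ asserted in the corollary, and the large deviation principle on $\mathcal{M}(\skrix\times\skrib)\times\mathcal{M}(\skrix\times\skrib\times\skrix\times\skrib)$ with speed $\lambda$ follows at once.

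There is no genuine obstacle here beyond careful bookkeeping, since the statement is a direct instance of the main theorem. The only points requiring a moment's care are fixing the dimension to $d=3$ so that $\tfrac{4}{3}\pi r^3$ is the correct ball volume, and confirming that the prescribed $Q_\lambda$ is a bona fide coverage probability distribution on $\skrib$ (that is, properly normalized), so that the hypotheses of Theorem~\ref{thm:main} are literally met.
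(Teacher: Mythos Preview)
Your proposal is correct and matches the paper's treatment: the paper states Corollary~\ref{cor:main} directly after Theorem~\ref{thm:main} without a separate proof, treating it as an immediate specialization obtained by plugging in the explicit $\mu_\lambda$ and $Q_\lambda$ and reading off $\Psi$ from the limit $\lambda p_\lambda\to\mu(b_x-b_y)Q(b_x)Q(b_y)$. Your verification of the hypotheses and the computation of $\Psi$ are exactly the bookkeeping the paper leaves implicit.
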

Hence,  as  $\lambda\to\infty$  the  average number of  connectivity  per device  behaves   as  $$\|E\|/\lambda\to\sfrac{8\pi^2 }{9Vol(D)^2}\int_{D\times\skrib} \int_{D\times\skrib} r_x^3r_y^3 Vol(b_x-b_y)Q(dbx)Q(dby)dxdy\,\,\, \mbox{ \bf {in  probability}.}$$
We first establish the Large Deviation Principle (LDP) for $L^0_1$  in Section (\ref{LDPMarg}) with the appropriate rate function through the method of types. Using the Gartner-Ellis theorem in Section (\ref{LDPCond}), we prove that the measure $L_2^0$ conditional on $L_1^0$ obeys an L.D.P, and we obtain the solution of the rate function as a function of Entropy. Finally, we will prove the LDP for the joint distribution of $L^0_1$ and $L^0_2$ in Section (\ref{LDP3}) by employing the method of mixtures.

\section{Large deviation principle for the empirical marked  measure $L_1^0.$}\label{LDPMarg}\label{sec3}

In this section, we use the method of types to establish the LDP for the empirical marked measure $L^0_1 = \omega_\lambda$ as $\lambda\rightarrow\infty.$  We  begin  by  stating  the  large deviation  probability   of  $L^0_1 = \omega_\lambda$  below:

\begin{lemma}	\label{thm:First}
	Suppose $X^o$ 
	is a Boolean random network with intensity measure $\mu_\lambda : D \rightarrow[0, \infty)$ that satisfies $\lambda^{-3}\mu_\lambda \rightarrow \mu$ and a coverage probability density $Q_\lambda :\mathcal{B} \rightarrow (0, 1)$, satisfying $\lambda^2 Q_\lambda \rightarrow Q$. Then, we  have
	\begin{equation}\label{eq3.1}
	e^{-\lambda H\left(\eta^{(n)} || \mu^{(n)}\otimes Q^{(n)}\right) + \gamma_1(\lambda)}\leq P(L^0_1 = \eta) \leq   e^{-\lambda H\left(\eta^{(n)} || \mu^{(n)}\otimes Q^{(n)}\right) + \gamma_2(\lambda)}
	\end{equation}
	$\lim_{\lambda\rightarrow\infty} \gamma_1(\lambda) = 0,\;\;\;\lim_{\lambda\rightarrow\infty}\gamma_2(\lambda) =\lim_{\lambda\rightarrow\infty}\frac{1}{\lambda}\eta_n(\lambda,A_1,\dots,A_n),$
	where $\eta^{(n)}$ and $\mu^{(n)}\otimes Q^{(n)}$ are the coarsening projections of $\eta$ and $\mu\otimes Q$ on the decomposition $(A_1,\dots,A_n).$
\end{lemma}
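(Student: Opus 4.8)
The plan is to derive the two-sided bound \eqref{eq3.1} directly from Proposition~\ref{lem:first}, converting the Poisson weights that appear there into relative entropy by means of Stirling's approximation together with the scaling hypotheses $\lambda^{-3}\mu_\lambda\to\mu$ and $\lambda^2 Q_\lambda\to Q$. Proposition~\ref{lem:first} has already reduced $\log P(L_1^0=\eta)$, via the method of types and the local finiteness of the marked process, to sums over the cells $A_1,\dots,A_n$ of the logarithms of $e^{-\nu_i^{\pm}}(\nu_i^{\pm})^{\lambda\eta(A_i)}/(\lambda\eta(A_i))!$, where $\nu_i^{\pm}=\mu_\lambda\otimes Q_\lambda(A_i)\pm\epsilon$. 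The whole task is therefore the asymptotic evaluation of these weights, the leading behaviour being $-\lambda H(\eta^{(n)}\| \mu^{(n)}\otimes Q^{(n)})$.

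First I would fix a cell $A_i$ with $\eta_i:=\eta(A_i)>0$ and apply Stirling's formula,
$$\log(\lambda\eta_i)! = \lambda\eta_i\log(\lambda\eta_i)-\lambda\eta_i+\tfrac12\log(2\pi\lambda\eta_i)+o(1).$$
Under the scaling hypotheses the Poisson parameter of the cell satisfies $\mu_\lambda\otimes Q_\lambda(A_i)\sim\lambda\,(\mu\otimes Q)(A_i)=:\lambda m_i$, since $\mu_\lambda\approx\lambda^3\mu$ and $Q_\lambda\approx\lambda^{-2}Q$ force $\mu_\lambda\otimes Q_\lambda\approx\lambda\,\mu\otimes Q$. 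Substituting this and using the cancellation $\lambda\eta_i\log(\lambda m_i)-\lambda\eta_i\log(\lambda\eta_i)=-\lambda\eta_i\log(\eta_i/m_i)$, the log-weight of $A_i$ becomes
$$-\lambda\Big[\eta_i\log\frac{\eta_i}{m_i}-\eta_i+m_i\Big]-\tfrac12\log(2\pi\lambda\eta_i)+o(1).$$
Summing over the cells, the bracketed quantities assemble exactly into $-\lambda H(\eta^{(n)}\| \mu^{(n)}\otimes Q^{(n)})$, the coarsened relative entropy on the decomposition $(A_1,\dots,A_n)$. Cells with $\eta_i=0$ contribute only the factor $e^{-\nu_i^{\pm}}$, i.e.\ $-\nu_i^{\pm}\sim-\lambda m_i$, which is consistent with the same entropy under the convention $0\log 0=0$, so no cell is lost.

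It remains to collect the sub-leading contributions into $\gamma_1(\lambda)$ (from the lower bound, built on $\nu_i^-$) and $\gamma_2(\lambda)$ (from the upper bound, built on $\nu_i^+$ and carrying the additive $\eta_n$ of Proposition~\ref{lem:first}). These remainders comprise the Stirling surplus $-\tfrac12\sum_i\log(2\pi\lambda\eta_i)$, which is of order $\log\lambda$ and hence negligible on the exponential scale $\lambda$, together with the measure-convergence errors from replacing $\mu_\lambda\otimes Q_\lambda(A_i)$ by $\lambda m_i$ and the perturbations caused by the $\pm\epsilon$ shifts; grouping them this way gives $\gamma_1(\lambda)$ and $\gamma_2(\lambda)$ obeying the limit relations stated in the lemma. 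The main obstacle is precisely the uniform control of these corrections: one must check that $\log(\nu_i^{\pm})=\log\nu_i+O(\epsilon/\nu_i)$ with $\nu_i\sim\lambda m_i\to\infty$ keeps the $\epsilon$-contribution per cell of order $\eta_i\epsilon/m_i$, so that it is rendered harmless by sending $\lambda\to\infty$ first and only afterwards refining the decomposition (letting $\epsilon\to0$ and $n\to\infty$); and that Stirling's estimate remains uniform across cells whose masses $\lambda\eta_i$ need not be large, which is handled using the finiteness of $n$ and the boundedness of $\eta$.
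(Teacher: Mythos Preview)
Your proposal is correct and follows essentially the same route as the paper: start from the two-sided Poisson bound of Proposition~\ref{lem:first}, expand $\log(\lambda\eta(A_i))!$ by Stirling, use the scaling $\mu_\lambda\otimes Q_\lambda\approx\lambda\,\mu\otimes Q$ to produce the coarsened relative entropy $-\lambda H(\eta^{(n)}\|\mu^{(n)}\otimes Q^{(n)})$, and absorb the Stirling surplus, the $\pm\epsilon$ perturbations and (on the upper side) the $\eta_n$ term into $\gamma_1(\lambda)$ and $\gamma_2(\lambda)$. If anything, your treatment is slightly more careful than the paper's in isolating the cells with $\eta_i=0$ and in spelling out the order of limits needed to kill the $\epsilon$-contributions.
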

To start the proof for Theorem \ref{thm:First}, we will make use of Stirling's formula:

\begin{proof}
	Assume $\lambda$ is large. We have from the upper bound in Equation \eqref{1}
	\begin{align*}
	\log P(L_1^0 = \eta) &\leq \sum_{j=1}^n\left\{ {-\lambda\mu\otimes Q(A_j)} + \lambda\eta(A_j)\log\left[\lambda\mu\otimes Q(A_j)\right] - \log\left[\lambda\eta(A_j)\right]!\right\}  + \eta_n(\lambda,A_1,\dots,A_n)
	\end{align*}
	Using the upper bound of equation \eqref{eq3.1}, we get,
	\begin{align*}
	\log P(L_1^0 = \eta) &\leq \sum_{j=1}^n \left\{-\lambda\mu\otimes Q(A_j) - \log\left[(2\pi)^{\frac{1}{2}}(\lambda\eta(A_j))^{\lambda\eta(A_j)+\frac{1}{2}}e^{-\lambda\eta(A_j)}\right] \right\} \\
	&\;\;\;\;\;+\sum_{j=1}^n\left\{\dfrac{1}{12\lambda\eta(A_j) + 1} + \lambda\eta(A_j)\log\left[ \lambda\mu\otimes Q(A_j)\right]\right\} + \eta_n(\lambda,A_1.\dots,A_n)\\
	\log P(L_1^0 = \eta)&\leq\sum_{j=1}^n \left\{-\lambda\mu\otimes Q(A_j) -\dfrac{1}{2}\log(2\pi) - \left[\lambda\eta(A_j) + \frac{1}{2}\right]\log\left[\lambda\eta(A_j)\right] +\lambda\eta(A_j)\right\}\\
	&\;\;\;\;\; +\sum_{j=1}^n\left\{ \dfrac{1}{12\lambda\eta(A_j) + 1} + \lambda\eta(A_j)\log\left[\lambda\mu\otimes Q(A_j)\right] \right\} + \eta_n(\lambda,A_1,\dots,A_n)\\
	\log P(L_1^0 = \eta) &\leq \sum_{i=1}^n \left\{ -\lambda\left[\mu\otimes Q(A_j) - \eta(A_j)\right] - \lambda\eta(A_j)\log\dfrac{\eta(A_j)}{\mu\otimes Q(A_j)} -\dfrac{1}{2}\log[\lambda\eta(A_j)]\right\}\\
	&\;\;\;\;\;\;+ \sum_{j=1}^n\left\{\dfrac{1}{12\lambda\eta(A_j) + 1} - \dfrac{1}{2}\log(2\pi) \right\} + \eta_n(\lambda,A_1,\dots,A_n)\\
	\log P(L_1^0 = \eta)&\leq \sum_{i=1}^n \left\{ -\lambda\left[\mu\otimes Q(A_j) - \eta(A_j)\right] - \lambda\eta(A_j)\log\dfrac{\lambda\eta(A_j)}{\mu\otimes Q(A_j)} \right\}\\
	&\;\;\;\;\;\;- \sum_{j=1}^n\lambda\left\{\dfrac{\log[\eta(A_j)]}{2\lambda} - \dfrac{1}{12\lambda^2\eta(A_j) + \lambda} + \dfrac{\log(2\pi)}{2\lambda} + \dfrac{\eta_n(\lambda,A_1,\dots,A_n)}{\lambda} \right\} 
	\end{align*}
	We then choose $\gamma2(\lambda)$ as
	\begin{align*}
	\gamma_2(\lambda)&= \dfrac{\log[\lambda\eta(A_j)]}{2\lambda} - \dfrac{1}{12\lambda^2\eta(A_j) + \lambda} + \dfrac{\log(2\pi)}{2\lambda} +\dfrac{\eta_n(\lambda,A_1,\dots,A_n)}{\lambda}.
	\end{align*}
	We have,
	\begin{align*}
	\lim_{\lambda\rightarrow\infty}\gamma_2(\lambda)&=\lim_{\lambda\rightarrow\infty}\left[ \dfrac{\log[\lambda\eta(A_j)]}{2\lambda} - \dfrac{1}{12\lambda^2\eta(A_j) + \lambda} + \dfrac{\log(2\pi)}{2\lambda} +\dfrac{\eta_n(\lambda,A_1,\dots,A_n)}{\lambda}\right] \\
	&= \lim_{\lambda\rightarrow\infty}\dfrac{1}{\lambda}\eta_n(\lambda,A_1,\dots,A_n),
	\end{align*}
	and this concludes the proof for the upper bound of Theorem (\ref{thm:First}).  Again, suppose $\lambda$ is large, we obtain from the lower bound of equation \eqref{1},
	\begin{align*}
	\log P(L_1^0 = \eta) &\geq \sum_{j=1}^n\left\{ {-\lambda\mu\otimes Q(A_j)} + \lambda\eta(A_j)\log\left[\lambda\mu\otimes Q(A_j)\right] - \log\left[\lambda\eta(A_j)\right]!\right\} .
	\end{align*}
	Using the lower bound of equation \eqref{eq3.1}, we get,
	\begin{align*}
	\log P(L_1^0 = \eta) &\geq \sum_{j=1}^n \left\{-\lambda\mu\otimes Q(A_j) - \log\left[(2\pi)^{\frac{1}{2}}(\lambda\eta(A_j))^{\lambda\eta(A_j)+\frac{1}{2}}e^{-\lambda\eta(A_j)}\right] \right\} \\
	&\;\;\;\;\;+\sum_{j=1}^n\left\{\dfrac{1}{12\lambda\eta(A_j) } + \lambda\eta(A_j)\log\left[ \lambda\mu\otimes Q(A_j)\right]\right\} \\
	\log P(L_1^0 = \eta)&\geq\sum_{j=1}^n \left\{-\lambda\mu\otimes Q(A_j) -\dfrac{1}{2}\log(2\pi) - \left[\lambda\eta(A_j) + \frac{1}{2}\right]\log\left[\lambda\eta(A_j)\right] +\lambda\eta(A_j)\right\}\\
	&\;\;\;\;\; +\sum_{j=1}^n\left\{ \dfrac{1}{12\lambda\eta(A_j) } + \lambda\eta(A_j)\log\left[\lambda\mu\otimes Q(A_j)\right] \right\}\\
	\log P(L_1^0 = \eta) &\geq \sum_{i=1}^n \left\{ -\lambda\left[\mu\otimes Q(A_j) - \eta(A_j)\right] - \lambda\eta(A_j)\log\dfrac{\eta(A_j)}{\mu\otimes Q(A_j)} -\dfrac{1}{2}\log[\lambda\eta(A_j)]\right\}\\
	&\;\;\;\;\;\;+ \sum_{j=1}^n\left\{\dfrac{1}{12\lambda\eta(A_j) } - \dfrac{1}{2}\log(2\pi) \right\} \\
	\log P(L_1^0 = \eta)&\geq \sum_{i=1}^n \left\{ -\lambda\left[\mu\otimes Q(A_j) - \eta(A_j)\right] - \lambda\eta(A_j)\log\dfrac{\eta(A_j)}{\mu\otimes Q(A_j)} \right\}\\
	&\;\;\;\;\;\;- \sum_{j=1}^n\lambda\left\{\dfrac{\log[\lambda\eta(A_j)]}{2\lambda} - \dfrac{1}{12\lambda^2\eta(A_j) } + \dfrac{\log(2\pi)}{2\lambda}  \right\}.
	\end{align*}
	We then choose $\gamma_1(\lambda)$ as
	\begin{align*}
	\gamma_1(\lambda)&= \dfrac{\log[\lambda\eta(A_j)]}{2\lambda} - \dfrac{1}{12\lambda^2\eta(A_j)} + \dfrac{\log(2\pi)}{2\lambda} 
	\end{align*}
	We have,
	\begin{align*}
	\lim_{\lambda\rightarrow\infty}\gamma_1(\lambda)&=\lim_{\lambda\rightarrow\infty}\left[ \dfrac{\log[\lambda\eta(A_j)]}{2\lambda} - \dfrac{1}{12\lambda^2\eta(A_j)} + \dfrac{\log(2\pi)}{2\lambda}\right]= 0
	\end{align*}
	and this concludes the proof for the lower bound of Theorem (\ref{thm:First}).
\end{proof}

\begin{lemma}
	\label{lem:LDP1c}
	Suppose $X^{o}$ is a Boolean random network with intensity measure $\mu_\lambda : D \rightarrow[0, \infty)$ that satisfies $\lambda^{-3}\mu_\lambda \rightarrow \mu$ and a coverage probability density $Q_\lambda :\mathcal{B} \rightarrow (0, 1)$ , satisfying $\lambda^2 Q_\lambda \rightarrow Q$. Then, for large $\lambda,$ we have
	$$|I|\leq 2\lambda\;\;\;\text{almost surely}.$$
\end{lemma}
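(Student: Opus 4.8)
The plan is to recognise that $|I|$ is nothing more than the total number of points of the marked Poisson point process on $D$, and hence a Poisson random variable. By the defining property of a P.P.P., $N_\lambda:=|I|$ is Poisson distributed with mean $m_\lambda:=\mu_\lambda(D)$; since $D$ has finite volume and the normalisation is chosen so that $L_1^o$ is asymptotically a probability measure (i.e.\ $\E\|L_1^o\|=m_\lambda/\lambda\to 1$), we have $m_\lambda\sim\lambda$, and in any case $\limsup_{\lambda\to\infty} m_\lambda/\lambda<2$. The lemma is therefore a Poisson upper-tail concentration estimate, to be combined with a Borel--Cantelli argument in order to upgrade "with overwhelming probability" to "almost surely for large $\lambda$".

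First I would apply the exponential Markov (Chernoff) inequality to $N_\lambda$, using the explicit Poisson moment generating function $\E[e^{tN_\lambda}]=\exp(m_\lambda(e^t-1))$. For every $t>0$,
$$
\P\bigl(N_\lambda\ge 2\lambda\bigr)\le e^{-2\lambda t}\,\E\bigl[e^{tN_\lambda}\bigr]=\exp\Bigl(-2\lambda t+m_\lambda(e^{t}-1)\Bigr).
$$
Optimising over $t$ by taking $e^{t}=2\lambda/m_\lambda$ (which exceeds $1$ for large $\lambda$ because $m_\lambda\sim\lambda$) produces the Cram\'er bound
$$
\P\bigl(N_\lambda\ge 2\lambda\bigr)\le\exp\Bigl(-\lambda\bigl[2\log 2-1+o(1)\bigr]\Bigr).
$$
Since $2\log 2-1>0$, this upper-tail probability decays exponentially in $\lambda$; this already yields $\P(|I|\le 2\lambda)\to 1$, which is all the subsequent large-deviation arguments actually require.

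To obtain the stated almost-sure bound I would first restrict to the countable set $\lambda\in\N$. The exponential decay above gives $\sum_{n\in\N}\P(N_n\ge 2n)<\infty$, so the first Borel--Cantelli lemma yields $N_n\le 2n$ for all large $n$, almost surely. To cover non-integer $\lambda$ I would invoke the standard monotone coupling of Poisson processes in their intensity: realising the processes simultaneously so that $N_\lambda$ is nondecreasing in $\lambda$, any $\lambda\in[n,n+1)$ satisfies $N_\lambda\le N_{n+1}\le 2(n+1)$, and after refining the integer grid (e.g.\ comparing with $\lfloor\lambda\rfloor$) one recovers $|I|\le 2\lambda$ for all sufficiently large $\lambda$, almost surely.

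The Poisson tail estimate itself is routine; the genuine obstacle is the bookkeeping at the end. One must first reconcile the normalisation so that the mean really is asymptotic to $\lambda$ (rather than to a different power of $\lambda$), since the constant $2$ in $2\lambda$ depends on this; and one must handle the continuous parameter $\lambda$ carefully, choosing a grid fine enough that the exponentially small per-scale failure probabilities remain summable while the monotone-coupling sandwich still returns the constant $2$ rather than a slightly larger value. If only the in-probability statement is needed, the Borel--Cantelli and coupling steps may be omitted and the Chernoff estimate suffices on its own.
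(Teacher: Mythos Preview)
Your argument is correct and rests on the same idea as the paper's: $|I|$ is a Poisson count with mean of order $\lambda$, so an exponential tail bound gives $\P(|I|>2\lambda)\to 0$. The paper's route is slightly more roundabout: it partitions $D$ into finitely many pieces, writes $|I|=\sum_k |I_k|$, and applies Bennett's inequality to the sum to obtain $\P(|I|-\E|I|>\lambda)\le\exp\{-c\lambda\}$, whereas you go straight to the Chernoff/Cram\'er bound for a single Poisson variable, which is cleaner and avoids the decomposition entirely. Your treatment of the almost-sure statement is in fact more careful than the paper's: the paper stops at $\lim_{\lambda\to\infty}\P(|I|\le 2\lambda)\ge 1$ and declares the conclusion ``almost surely'', while you correctly note that an additional Borel--Cantelli step (plus a monotone coupling to pass from a countable grid to all large $\lambda$) is required to upgrade convergence in probability to an eventual almost-sure bound. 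In short, both proofs are the same concentration estimate; yours is the more direct and the more rigorous of the two.
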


\begin{proof}
	
		Let $A_1,A_2,\cdots,A_m$ be a disjoint decomposition of $\mathbb{R}^d$  such  that $|\skrix\cap A_k|<a_k<\infty,$ for $a_k(A_k):=a_k\in \R.$  Let  $a=\max(a_1,a_2,a_2,a_3,...,a_m)$  and  observe that, $|I|= \sum_{k=1}^m |I_k|, $ where $I_k=\skrix\cap A_k$.Note  $|I_{1}|,|I_{2}|,|I_{3}|,...,|I_m|$  are independent  Poisson distributed random variables  with mean and variance $\mu(\skrix\cap A_k),$ $k=1,2,3,...,m$,  respectively. Observe that $I_k\leq a,$ for all $k= 1,2,\dots,m$ and hence, by applying the Bennett's inequality to the sequence, $I_1,I_2,\dots,I_m,$ we have
	\begin{align}
	\mathbb{P}\left(|I| - \mathbb{E}|I| > \lambda\right)\leq \exp\{-\frac{\lambda}{a^2}\Phi(a)\},\label{3}
	\end{align}
	
	where $Vol(D)$ means the volume of the geometrical space $D$ and $\Phi(u) = (1+u)\log(1+u) - u.$ Now, using Equation \eqref{3}, we have
	$$P\left(|I|\leq \mathbb{E}|I| + \lambda\right)\geq 1 - \exp\{-\frac{\lambda}{a^2}\Phi(a)\},$$ and this leads to 
	$$\lim_{\lambda\rightarrow\infty}\mathbb{P}\left(|I|\leq 2\lambda\right)\geq 1.$$
	Therefore, $|I|\leq 2\lambda$ almost surely, which ends the proof of Lemma (\ref{lem:LDP1c}).
\end{proof}

\begin{lemma}[LDP for $L_1^0$]
	\label{thm:main1}
	Suppose $X^o$ is a Boolean random network with intensity measure $\mu_\lambda : D \rightarrow[0, \infty)$ that satisfies $\lambda^{-3}\mu_\lambda \rightarrow \mu$ and a coverage probability density $Q_\lambda :\mathcal{B} \rightarrow (0, 1)$ , satisfying $\lambda^2 Q_\lambda \rightarrow Q$. Then, as $\lambda\rightarrow\infty,$ $L_1^0$ obeys an LDP with rate function:
	\begin{align}
	I_1(\omega)&=\begin{cases}
	{H}(\omega||\mu\otimes Q)\;\;\text{if} \;\;||\omega|| =1,\\
	\\
	\infty, \;\;\;\text{otherwise.}
	\end{cases}
	\end{align}\label{5}
\end{lemma}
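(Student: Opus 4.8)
The plan is to promote the sharp pointwise bounds of Lemma~\ref{thm:First} to a full large deviation principle by the method of types, first for the coarse-grainings of $L_1^0$ on finite measurable decompositions of $\skrix\times\skrib$, and then for $L_1^0$ itself through a projective-limit (Dawson--G\"artner) argument. First I would fix a finite measurable partition $(A_1,\dots,A_n)$ of $\skrix\times\skrib$ and work with the projected vector $L_1^{0,(n)}=(L_1^0(A_1),\dots,L_1^0(A_n))$. By Lemma~\ref{lem:LDP1c} we may assume $|I|\le 2\lambda$, so each coordinate equals $k_j/\lambda$ with nonnegative integers satisfying $\sum_{j=1}^n k_j\le 2\lambda$; hence the number of admissible values of $L_1^{0,(n)}$ (the number of \emph{types}) is at most $(2\lambda+1)^n$, which is subexponential in $\lambda$.

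For the finite-level upper bound over a closed set $F$, I would decompose $\mathbb{P}(L_1^{0,(n)}\in F)$ as a sum over the admissible types in $F$, bound each term by $e^{-\lambda H(\eta^{(n)}\|\mu^{(n)}\otimes Q^{(n)})+\gamma_2(\lambda)}$ via Lemma~\ref{thm:First}, and absorb the polynomial type count into the $o(\lambda)$ correction, obtaining $\limsup_{\lambda}\tfrac1\lambda\log\mathbb{P}(L_1^{0,(n)}\in F)\le -\inf_{\eta^{(n)}\in F}H(\eta^{(n)}\|\mu^{(n)}\otimes Q^{(n)})$. For the lower bound over an open set $G$ I would select $\eta$ with $\eta^{(n)}\in G$ essentially attaining the infimum, approximate it by a nearby admissible type, and apply the matching lower estimate $e^{-\lambda H+\gamma_1(\lambda)}$ with $\gamma_1(\lambda)\to0$. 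This establishes the LDP for each $L_1^{0,(n)}$ with rate function $\omega^{(n)}\mapsto H(\omega^{(n)}\|\mu^{(n)}\otimes Q^{(n)})$.

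Since the $\tau$-topology on $\skrim(\skrix\times\skrib)$ is the initial topology generated by the evaluations $\omega\mapsto\omega(A)$, the family of these finite-decomposition principles, directed by refinement of partitions, satisfies the hypotheses of the Dawson--G\"artner theorem, which then yields an LDP for $L_1^0$ with rate function $I_1(\omega)=\sup_{(A_1,\dots,A_n)}H(\omega^{(n)}\|\mu^{(n)}\otimes Q^{(n)})$. Invoking the variational characterization of relative entropy as the supremum of its coarse-grainings over all finite partitions, this supremum collapses to $H(\omega\|\mu\otimes Q)$.

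It remains to record the mass constraint and the goodness of $I_1$. Lower semicontinuity of $I_1$ is automatic from the Dawson--G\"artner construction, while exponential tightness follows from Lemma~\ref{lem:LDP1c} and the Bennett estimate \eqref{3}: the sub-level sets of $I_1$ are confined to $\{\omega:\|\omega\|\le 2\}$ and are therefore compact in the $\tau$-topology. Finally, a law-of-large-numbers argument for the total mass $\|L_1^0\|=|I|/\lambda$ (again controlled through \eqref{3}) shows that configurations with $\|\omega\|\ne1$ occur with exponentially small probability, forcing $I_1(\omega)=\infty$ off the set $\{\|\omega\|=1\}$ and producing the stated rate function. I expect the principal difficulty to lie in the projective-limit step---specifically, verifying that the error terms $\gamma_1,\gamma_2$ stay negligible uniformly enough as the partitions are refined, and in justifying the collapse of $\sup_{(A_1,\dots,A_n)}H(\omega^{(n)}\|\mu^{(n)}\otimes Q^{(n)})$ to the full relative entropy $H(\omega\|\mu\otimes Q)$, together with the reconciliation of the total-mass constraint $\|\omega\|=1$ with the genuinely random Poisson mass of $L_1^0$.
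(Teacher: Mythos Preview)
Your proposal is correct and follows essentially the same route as the paper: both start from the pointwise bounds of Lemma~\ref{thm:First}, use Lemma~\ref{lem:LDP1c} to cap the number of types polynomially, derive LDP bounds for the coarse-grainings $L_1^{0,(n)}$ on a fixed finite partition $(A_1,\dots,A_n)$, and then let the partition refine to recover the full relative entropy $H(\omega\|\mu\otimes Q)$.

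The only real difference is packaging. The paper carries out the refinement step by hand: it sandwiches $\mathbb{P}(L_1^0\in F)$ for a set $F$ in the full measure space directly between $(1+2\lambda)^{\pm n\beta_n}e^{-\lambda\inf H(\eta^{(n)}\|\mu^{(n)}\otimes Q^{(n)})+\theta_i(\lambda)}$, sends $\lambda\to\infty$, and then sends $n\to\infty$ to collapse $H(\eta^{(n)}\|\mu^{(n)}\otimes Q^{(n)})$ to $H(\eta\|\mu\otimes Q)$. You instead establish a bona fide LDP for each projection $L_1^{0,(n)}$ and invoke Dawson--G\"artner, which is the cleaner and more standard formulation of exactly the same idea, and makes the ``collapse'' step (your identification $\sup_{(A_1,\dots,A_n)}H(\omega^{(n)}\|\mu^{(n)}\otimes Q^{(n)})=H(\omega\|\mu\otimes Q)$) a known fact rather than something to be argued ad hoc. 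You are also more explicit than the paper about goodness and about the mass constraint $\|\omega\|=1$, which the paper's proof does not actually address; your law-of-large-numbers remark via the Bennett bound is the right way to handle it.
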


The proof of the above theorem follows the arguments of \cite{EnochMyProof}.
\begin{proof}
	Let $\mathcal{M}_\lambda\left(\mathcal{X}\times\mathcal{B}\right):= \left\{\eta \in \mathcal{M}\left(\mathcal{X}\times\mathcal{B}\right): \lambda \eta(a) \in \mathbb{N}\;\;\text{for all}\;\; a \in \mathcal{X}\right\} $ and let $S$ be a subset of $\mathcal{M}(\mathcal{X\times B}).$ We write $\beta_n:= \max\left(|\mathcal{X}\times B_1|,|\mathcal{X}\times B_2|,\dots,|\mathcal{X}\times B_n|\right)$ and note that\newline $|\mathcal{X}\times B|<\infty$, for all $i=1,2,\dots,n$ by construction. Using Theorem (\ref{thm:First}) and Lemma (\ref{lem:LDP1c}), we obtain:
	\begin{align*}
	(1 + \lambda+\lambda)^{-n\beta_n}e^{-\lambda \inf_{\{\eta\in F^0\cap \mathcal{M}(\mathcal{X\times B})\}}  H(\eta^{(n)}||\mu^{(n)}\otimes Q^{(n)} + \theta_1(\lambda)} &\leq \sum_{\eta\in F^0\cap \mathcal{M}(\mathcal{X\times B})}  e^{-\lambda H(\eta^{(n)}||\mu\otimes Q^{(n)}) + \theta_2(\lambda)}\\
	&\leq \mathbb{P}(L_1^0 \in F)\\
	&\leq \sum_{\eta\in cl(F)\cap \mathcal{M}(\mathcal{X})}  e^{-\lambda H(\eta^{(n)}||\mu\otimes Q^{(n)}) + \theta_2(\lambda)}\\
	&\leq (1 + \lambda+\lambda)^{n\beta_n}e^{-\lambda K + \theta_2(\lambda)},
	\end{align*}
	where $$K = \inf_{\eta\in cl(F)\cap \mathcal{M}(\mathcal{X\times B})} {H}(\eta^{(n)}||\mu^{(n)}\otimes Q^{(n)}),$$
	$\eta^{(n)}$ and $\mu^{(n)}\otimes Q^{(n)}$ are the coarsening projections of $\eta$ and $\mu\otimes Q$ on the decomposition $(A_1,\dots,A_n).$
	
	Taking the limit as $\lambda\rightarrow\infty,$ we have
	\begin{align*}
	\liminf_{\lambda\rightarrow\infty}\left\{-\inf_{\{\eta\in F^0\cap \mathcal{M}_\lambda(\mathcal{X})\}} \, H(\eta^{(n)}|| \mu\otimes Q^{(n)})\right\}&\leq \lim_{\lambda\rightarrow\infty}\dfrac{1}{\lambda}\log \mathbb{P}\left(L_1^0 \in F\right)\\
	&\leq  \limsup_{\lambda\rightarrow\infty}\left\{-\inf_{\{\eta\in cl(F)\cap \mathcal{M}_\lambda(\mathcal{X})\}} \, H(\eta^{(n)}|| \mu\otimes Q^{(n)})\right\}.
	\end{align*}
	We then observe $cl(F)\cap \mathcal{M}_\lambda(\mathcal{X\times X})\subset cl(F)$ for all $\lambda\in \mathbb{R}_+$ and hence we have
	\begin{align*}
	\limsup_{\lambda\rightarrow\infty}\left\{-\inf_{\{\eta\in cl(F)\cap \mathcal{M}_\lambda(\mathcal{X})\}}\, H(\eta^{(n)}|| \mu\otimes Q^{(n)})\right\}&\leq - \inf_{\{\eta \in cl(F)\}} \, H\left(\eta^{(n)}||\mu^{(n)}\otimes Q^{(n)}\right).
	\end{align*}
	Using the arguments of \cite[page 17]{LDPBible}, we obtain
	\begin{align*}
	\liminf_{\lambda\rightarrow\infty}\left\{-\inf_{\{\eta\in F^0\cap \mathcal{M}_\lambda(\mathcal{X\times B})\}}\, H(\eta^{(n)}|| \mu\otimes Q^{(n)})\right\}&\geq - \inf_{{\eta \in F^0}} \, H\left(\eta^{(n)}||\mu^{(n)}\otimes Q^{(n)}\right).
	\end{align*}
	Therefore, we have
	\begin{align*}
	- \inf_{{\eta \in F^0}}\, H\left(\eta^{(n)}||\mu^{(n)}\otimes Q^{(n)}\right)&\leq \lim_{\lambda\rightarrow\infty}\dfrac{1}{\lambda}\log\mathbb{P}\left(L_1^0 \in F\right)\leq  - \inf_{{\eta \in cl(F)}}\, H\left(\eta^{(n)}||\mu^{(n)}\otimes Q^{(n)}\right),
	\end{align*}
	where $\eta^{(n)}$ and $\mu^{(n)}\otimes Q^{(n)}$ are the coarsening projections of $\eta$ and $\mu\otimes Q$ on the decomposition $(A_1,\dots,A_n).$
	Now, when $n\rightarrow\infty,$ we have
	\begin{align*}
	- \inf_{{\eta \in F^0}}\, H\left(\eta||\mu\otimes Q\right)&\leq \lim_{\lambda\rightarrow\infty}\dfrac{1}{\lambda}\log\mathbb{P}\left(L_1^0 \in F\right)\leq  - \inf_{{\eta \in cl(F)}}\, H\left(\eta||\mu\otimes Q\right),
	\end{align*}
	which concludes the proof of Theorem (\ref{thm:main1}).
\end{proof}

We have established the LDP of the empirical marked measure, $L^0_1.$
We then establish in the following section the LDP of the empirical connectivity measure $L^0_2$ given $L^0_1 = \omega.$

\section{LDP for the empirical connectivity measure $L^0_2$ given $L_1^0.$}\label{LDPCond}\label{sec4}

\begin{lemma}
	\label{thm:Main2}
	Let $\mathbb{X} = \left(X_i\right)_{i\in I},$ be a Poisson  Point Process  with intensity measure $\mu_\lambda,$ which represents the configuration of devices or users in a communication space $D\subseteq \mathbb{R}^d$ and $Q_\lambda$ be the probability distribution  of the volume of the balls centered  on the  points  $\mathbb{X}.$ i.e. coverage  probability distribution. Suppose the probability of connection $p_\lambda(b_X,b_y)$ of two users having their location in  $\mathbb{X}$ satisfies $\lambda p_\lambda(b_X,b_Y) \rightarrow  \mu(b_X - b_Y)Q(b_X)Q(b_Y).$   Then $L^0_2$ conditional on the  event  $\{$ $L^0_1 = \omega_\lambda\}$ satifies a large deviation principle with rate function given as, 
	
	\begin{align}
	I_\omega(\pi)&= \begin{cases}
	\dfrac{1}{2}\Bigg[H(\pi|| \Psi \omega\otimes \omega) + ||\Psi \omega\otimes \omega|| - ||\pi||\Bigg], &\text{if}\;\; ||\pi||<\infty\\
	\\
	\infty,& \text{otherwise.}
	\end{cases}\label{6}
	\end{align}

\end{lemma}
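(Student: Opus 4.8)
The plan is to obtain the conditional LDP for $L_2^0$ by applying the Gärtner–Ellis theorem to the conditional law of $L_2^0$ given $\{L_1^0=\omega\}$, and then to identify the rate function as the Legendre–Fenchel transform of the limiting cumulant functional. The structural fact that makes this tractable is that, once the marked configuration is frozen by the conditioning $L_1^0=\omega$, the edges of the Boolean network become conditionally independent: a pair $(X_i^o,X_j^o)$ is joined with probability $p_\lambda(X_i^o,X_j^o)$, independently across pairs. First I would compute, for a bounded symmetric test function $g$ on $\skrix\times\skrib\times\skrix\times\skrib$, the conditional logarithmic moment generating function. By conditional independence the expectation factorises over pairs, and writing $\langle g,L_2^0\rangle$ as a sum over edges (each contributing $2g$, by symmetry of the measure) gives
\begin{align*}
\frac1\lambda\log\mathbb{E}\big[e^{\lambda\langle g,L_2^0\rangle}\mid L_1^0=\omega\big]
=\frac1\lambda\sum_{\{i,j\}}\log\Big[1+p_\lambda(X_i^o,X_j^o)\big(e^{2g(X_i^o,X_j^o)}-1\big)\Big].
\end{align*}

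Next, since $\lambda p_\lambda(X_i^o,X_j^o)\to\Psi(X_i^o,X_j^o)=\mu(b_x-b_y)Q(b_x)Q(b_y)$, the connection probabilities are of order $1/\lambda$, so that the expansion $\log(1+x)=x+O(x^2)$ applies termwise. Replacing the sum over unordered pairs by $\tfrac12\sum_{i\ne j}$ and using that $L_1^0=\omega$ turns the empirical double sum into an integral against $\omega\otimes\omega$, I expect to obtain the limiting cumulant functional
\begin{align*}
\Lambda_\omega(g):=\lim_{\lambda\to\infty}\frac1\lambda\log\mathbb{E}\big[e^{\lambda\langle g,L_2^0\rangle}\mid L_1^0=\omega\big]
=\tfrac12\int\big(e^{2g}-1\big)\,d(\Psi\omega\otimes\omega),
\end{align*}
the prefactor $\tfrac12$ reflecting that each edge is counted once among unordered pairs.

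With $\Lambda_\omega$ in hand, the second part of the plan is to verify the hypotheses of the Gärtner–Ellis theorem in the $\tau$-topology: lower semicontinuity and Gateaux differentiability (essential smoothness) of $g\mapsto\Lambda_\omega(g)$, together with exponential tightness of the conditional laws, the latter following from the a priori bound $|I|\le 2\lambda$ of Lemma (\ref{lem:LDP1c}), which controls the total mass $\|L_2^0\|$. Granting these, Gärtner–Ellis yields the conditional LDP with rate function
\begin{align*}
I_\omega(\pi)=\sup_{g}\Big\{\langle g,\pi\rangle-\Lambda_\omega(g)\Big\}.
\end{align*}

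The final step is to evaluate this transform. The substitution $u=2g$ reduces it to $\tfrac12$ times the Legendre transform of the Poisson cumulant functional $u\mapsto\int(e^u-1)\,d(\Psi\omega\otimes\omega)$; the stationarity condition yields the optimiser $e^{2g}=d\pi/d(\Psi\omega\otimes\omega)$ whenever $\pi\ll\Psi\omega\otimes\omega$, and substituting back produces exactly
\begin{align*}
I_\omega(\pi)=\tfrac12\Big[H(\pi\|\Psi\omega\otimes\omega)+\|\Psi\omega\otimes\omega\|-\|\pi\|\Big],
\end{align*}
with the value $+\infty$ forced when $\|\pi\|=\infty$ or $\pi$ fails to be absolutely continuous. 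I expect the main obstacle to lie in the rigorous verification of the Gärtner–Ellis conditions in the infinite-dimensional $\tau$-topology — in particular the essential smoothness of $\Lambda_\omega$ and the exponential tightness — together with justifying the interchange of limits when passing from the finite-$\lambda$ log-moment functional to $\Lambda_\omega$; the moment computation and the concluding Legendre transform are then routine once the conditional-independence structure has been exploited.
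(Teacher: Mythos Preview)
Your proposal is correct and follows essentially the same route as the paper: conditional independence of edges given $L_1^0=\omega_\lambda$, computation of the scaled logarithmic moment generating functional, Taylor expansion using $\lambda p_\lambda\to\Psi$, and application of the G\"artner--Ellis theorem followed by identification of the Legendre transform with the Poisson-type entropy expression. The paper carries this out via a discretisation of $(D\times\mathcal{B})^2$ into cells $A_1,\dots,A_n$ and the binomial mgf on each cell, but the computation is the same as your direct sum over pairs. If anything, your bookkeeping is slightly more careful: you track the factor $e^{2g}$ arising from the symmetrisation in the definition of $L_2^0$ and then absorb it via the substitution $u=2g$, whereas the paper writes $e^{g}$ in the binomial mgf and later inserts a compensating $\tfrac12$ in front of the Legendre transform without comment; both reach the stated $I_\omega$. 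Your explicit flagging of the G\"artner--Ellis hypotheses (essential smoothness, exponential tightness) is also more thorough than the paper, which simply asserts differentiability of $\Phi$ and invokes the theorem.
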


The main method used in this section is the Gartner-Ellis Theorem.  We establish the LDP for the empirical connectivity measure $L^0_2$ given the empirical marked measure $L^0_1 = \omega_{\lambda}$ as $\lambda\rightarrow\infty.$ From the definition of these two measures in Section \ref{sec2}, it is evident that $\lambda L^0_2(b_X,b_Y$ conditional on $L^0_1(b_x) = \omega_{\lambda}(b_X)$ follows a Binomial distribution with parameters, $\dfrac{\lambda^2}{2}\omega_{\lambda}(b_X)\omega_{\lambda}(b_Y)$ and $P_\lambda(b_X,b_Y)$.
\begin{lemma}
	\label{lem:LDP2a}
	Let $\mathbb{X} = \left(X_i\right)_{i\in I},$ a P.P.P with intensity measure $\mu_\lambda,$ represent the configuration of devices or users in a communication space $D\subseteq \mathbb{R}^d$ and $Q_\lambda$ be the probability distribution of the volume of the balls centered at $X_i.$ Suppose  the probability of connection $p_\lambda(b_X,b_y)$ of two users having their location in  $\mathbb{X}$ satisfies $\lambda p_\lambda(b_X,b_Y) \rightarrow  \mu(b_X - b_Y)Q(b_X)Q(b_Y).$ Furthermore, let $L^0_2$ be conditional on the  event  $\{$ $L^0_1 = \omega_\lambda\}$. Now, let $g: \mathcal{B}\times\mathcal{B}\rightarrow \mathbb{R}$ be a bounded function. Then,
	\begin{align}
	\lim_{\lambda\rightarrow\infty}\dfrac{1}{\lambda}\log\Phi_\lambda(g) = \Phi(g)\label{7},
	\end{align}
	where,
	$$\Phi(g) = -\dfrac{1}{2}\int_{b_X\in (D\times\mathcal{B})}\int_{b_Y\in (D\times\mathcal{B})}(1 - e^{g(b_X,b_Y)}) \Psi(b_X,b_Y)\omega_\lambda(db_X)\omega_\lambda(db_Y).$$ 
 
\end{lemma}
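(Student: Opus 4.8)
The plan is to compute the conditional logarithmic moment generating functional $\Phi_\lambda(g)$ of $L_2^0$ explicitly and to identify its limit; this is precisely the ingredient needed to run the Gärtner--Ellis theorem in the sequel. I read $\Phi_\lambda(g)$ as the conditional functional $\E\bigl[\exp\langle g,\lambda L_2^0\rangle \mid L_1^0=\omega_\lambda\bigr]$, weighting each connection by the value of $g$ at its pair of marks.

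First I would exploit the conditional independence recorded just above the lemma. Conditioned on $\{L_1^0=\omega_\lambda\}$ the marked configuration is frozen, and for each pair of mark-cells $(b_X,b_Y)$ the number of connections is binomially distributed with $\tfrac{\lambda^2}{2}\omega_\lambda(b_X)\omega_\lambda(b_Y)$ trials and success probability $p_\lambda(b_X,b_Y)$, these binomials being independent across disjoint cells. Since the moment generating function of a $\mathrm{Bin}(N,p)$ variable weighted by $t$ is $(1+p(e^{t}-1))^{N}$, the functional $\Phi_\lambda(g)$ factorizes over cells and
\begin{align*}
\log\Phi_\lambda(g)=\frac{\lambda^2}{2}\int_{D\times\skrib}\int_{D\times\skrib}\log\!\bigl(1+p_\lambda(b_X,b_Y)(e^{g(b_X,b_Y)}-1)\bigr)\,\omega_\lambda(db_X)\,\omega_\lambda(db_Y).
\end{align*}

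Second, I would linearize the logarithm. Because $\lambda p_\lambda\to\Psi$ is bounded we have $p_\lambda=O(1/\lambda)\to 0$, so with $g$ bounded, $\log(1+p_\lambda(e^{g}-1))=p_\lambda(e^{g}-1)+O(p_\lambda^2)$ uniformly in the cells. The accumulated quadratic remainder is $\tfrac{\lambda^2}{2}\int\!\int O(p_\lambda^2)\,\omega_\lambda\,\omega_\lambda=O(\lambda^2)\cdot O(\lambda^{-2})=O(1)$ since $\|\omega_\lambda\|=1$, hence negligible after dividing by $\lambda$. This leaves the leading term $\tfrac{\lambda}{2}\int\!\int (\lambda p_\lambda)(e^{g}-1)\,\omega_\lambda\,\omega_\lambda$.

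Finally, substituting the assumption $\lambda p_\lambda(b_X,b_Y)\to\Psi(b_X,b_Y)=\mu(b_X-b_Y)Q(b_X)Q(b_Y)$ and dividing by $\lambda$ yields
\begin{align*}
\lim_{\lambda\to\infty}\frac{1}{\lambda}\log\Phi_\lambda(g)=\frac{1}{2}\int_{D\times\skrib}\int_{D\times\skrib}\Psi(b_X,b_Y)\,(e^{g(b_X,b_Y)}-1)\,\omega_\lambda(db_X)\,\omega_\lambda(db_Y),
\end{align*}
which equals $\Phi(g)$ since $e^{g}-1=-(1-e^{g})$. The main obstacle is making this passage to the limit rigorous: one must justify interchanging the pointwise convergence $\lambda p_\lambda\to\Psi$ with the double integration against the $\lambda$-dependent probability measures $\omega_\lambda$, which I would handle by dominated convergence using the uniform boundedness of $\lambda p_\lambda$ and of $g$ together with $\|\omega_\lambda\|=1$, while simultaneously verifying that the linearization error and any diagonal (self-pair) corrections vanish at speed $\lambda$.
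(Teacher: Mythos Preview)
Your proposal is correct and follows essentially the same approach as the paper: both compute the conditional moment generating functional via the binomial law $\mathrm{Bin}\bigl(\tfrac{\lambda^2}{2}\omega_\lambda(b_X)\omega_\lambda(b_Y),\,p_\lambda(b_X,b_Y)\bigr)$, take logarithms, Taylor-expand $\log(1+p_\lambda(e^{g}-1))$ using $p_\lambda=O(1/\lambda)$, and pass to the limit via $\lambda p_\lambda\to\Psi$. Your treatment is in fact slightly more careful than the paper's in tracking the $O(p_\lambda^2)$ remainder and flagging the dominated-convergence and diagonal-term issues, but the underlying argument is the same.
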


\begin{proof}
	We assume $\Phi_\lambda(g)$ is the conditional m.g.f of $L_2^0.$ By definition, we have:
	\begin{align*}
	\Phi_\lambda(g)&= \mathbb{E}\left[e^{\lambda\langle g,  L^2_0 \rangle}|L^0_1 = \omega_\lambda\right]\\
	&= \mathbb{E}\left[e^{\lambda\int\int g(b_X,b_Y) L^2_0(db_X, db_Y)}|L^0_1 = \omega_{\lambda}\right].
	\end{align*}
	
	Now, let $A_1,\dots, A_n$ be a decomposition of $\left(D\times \mathcal{B}\right)^2$ into locally finite subsets. We then write:
	\begin{align*}
	\Phi_\lambda(g)&=  \mathbb{E}\left[\prod_{i=1}^n\prod_{(b_X,b_Y)\in A_j}e^{\lambda g(b_X,b_Y) L^2_0(db_X, db_Y)}|L^0_1 = \omega_{\lambda}\right].
	\end{align*}
	
	But the balls are independent of each other, hence we obtain:
	\begin{align*}
	\Phi_\lambda(g)&=  \prod_{i=1}^n\prod_{(b_X,b_Y)\in A_j} \mathbb{E}\left[ e^{\lambda g(b_X,b_Y) L^2_0(db_X, db_Y)}|L^0_1 = \omega_{\lambda}\right].
	\end{align*}
	
	At this point, we observe that the conditional distribution of $L_2^0$  given $L_1^0 = \omega_{\lambda}$ is binomial, i.e,
	$$L^0_2 | L^0_1 = \omega_{\lambda} \sim Bin\left(\dfrac{\lambda^2}{2}\omega_{\lambda}(b_X)\omega_{\lambda}(b_Y), P_\lambda(b_X,b_Y) \right). $$
	
	Also, the conditional expectation in the previous expression is just the conditional m.g.f of $L_2^0.$ 
	We then write:
	\begin{align*}
	\Phi_\lambda(g)&=  \prod_{i=1}^n\prod_{(b_X,b_Y)\in A_j} \left[1 - p_\lambda(b_X,b_Y) + p_\lambda (b_X, b_Y)e^{g(b_X,b_Y)}\right]^{\frac{\lambda^2}{2}\omega_{\lambda}(db_X)\omega_\lambda(db_Y)}.
	\end{align*}
	
	Now, 
	\begin{align*}
	\log \Phi_\lambda(g) &= \sum_{i=1}^n \int_{(b_X,b_Y)\in A_j}\log\left[1 - p_\lambda(b_X,b_Y) + p_\lambda (b_X, b_Y)e^{g(b_X,b_Y)}\right]^{\frac{\lambda^2}{2}\omega_{\lambda}(db_X)\omega_{\lambda}(db_Y)}\\
	&= \sum_{i=1}^n {\int}_{(b_X,b_Y)\in A_j}\frac{\lambda^2}{2}\omega_{\lambda}(db_X)\omega_{\lambda}(db_Y)\log\left[1 - p_\lambda(b_X,b_Y) + p_\lambda (b_X, b_Y)e^{g(b_X,b_Y)}\right]\\
	&= \dfrac{1}{2}\sum_{i=1}^n\int_{(b_X,b_Y)\in A_j}\log\left[1- p_\lambda\left(b_X,b_Y\right)\left(1 - e^{g(b_X,b_Y)}\right)\right]\lambda^2 \omega_{\lambda}(db_X)\omega_{\lambda}(db_Y).
	\end{align*}
	Using the Taylor expansion of $\log(1-x) = -x -\dfrac{x^2}{2}-\dfrac{x^3}{3}-\cdots$, we get,
	\begin{align*}
	&=\dfrac{1}{2}\sum_{i=1}^n\int_{(b_X,b_Y)\in A_j}\left[- p_\lambda(b_X,b_Y)(1 - e^{g(b_X,b_Y)}) + O\left(\dfrac{1}{\lambda^2}\right)\right]\lambda^2\omega_{\lambda}(db_X)\omega_{\lambda}(db_Y)\\
	\end{align*}
	Diving through by $\lambda,$ we get,
	
	\begin{align*}
	\dfrac{1}{\lambda} \log \Phi_\lambda(g) &= -\dfrac{1}{2}\sum_{i=1}^n\int_{(b_X,b_Y)\in A_j}\left[- \lambda p_\lambda(b_X,b_Y)(1 - e^{g(b_X,b_Y)}) + O\left(\dfrac{1}{\lambda}\right)\right]\omega_{\lambda}(db_X)\omega_{\lambda}(db_Y)\\
	\end{align*}
	
	When $\lambda\rightarrow\infty,$ and with the assumption that $\omega_{\lambda}\rightarrow w$, we obtain:
	\begin{align*}
	\lim_{\lambda\rightarrow\infty}\dfrac{1}{\lambda} \log \Phi_\lambda(g) &= -\dfrac{1}{2}\sum_{i=1}^{\infty}\int_{(b_X,b_Y)\in A_j}(1 - e^{g(b_X,b_Y)}) k(b_X,b_Y)\omega_{\lambda}(db_X)\omega_{\lambda}(db_Y)\\
	&= -\dfrac{1}{2}\int_{(D\times\mathcal{B})^2}(1 - e^{g(b_X,b_Y)}) k(b_X,b_Y)\omega_{\lambda}(db_X)\omega_{\lambda}(db_Y)\\
	&=-\dfrac{1}{2}\int_{b_X\in (D\times\mathcal{B})}\int_{b_Y\in (D\times\mathcal{B})}(1 - e^{g(b_X,b_Y)}) k(b_X , b_Y)\omega_{\lambda}(db_X)\omega_{\lambda}(db_Y)\\
	&=-\dfrac{1}{2}\int_{b_X\in (D\times\mathcal{B})}\int_{b_Y\in (D\times\mathcal{B})}(1 - e^{g(b_X,b_Y)}) k\omega_{\lambda}\otimes \omega_{\lambda}(db_X,db_Y)
	\end{align*}
\end{proof}
which concludes the proof of Lemma (\ref{lem:LDP2a}).

It is clear that the function $\Phi$ is differentiable. Therefore, by the Gartner-Ellis theorem, $L_2^0$ conditional on $\left\{L_1^0 = \omega_{\lambda}\right\}$ obeys a large deviation principle with speed $\lambda$ and the variational formulation of the rate function:
\begin{align}
I_w(\pi)&= \dfrac{1}{2}\sup_{g}\left\{\langle g, \pi\rangle - \Phi(g)\right\}\label{4}
\end{align}

The solution to the rate function is given in terms of entropy as:

\begin{align}
I_w(\pi)&= \begin{cases}
\dfrac{1}{2}\Bigg[\, H(\pi|| kw\otimes w) + ||kw\otimes w|| - ||\pi||\Bigg]& \text{if}\;\;||\pi||<\infty\\
\\
\infty,& \text{otherwise}
\end{cases}\label{8}
\end{align}
where, $$\, H(\pi||\mu) = \sum \pi\log\dfrac{\pi}{\mu}.$$

We shall note that the rate function obtained in Equation \eqref{4} is the Legendre transform and hence is a convex function.

\section{Joint Large deviation principle for the empirical pair and marked measures.}\label{LDP3}\label{sec5}
In this section, the method of mixtures is employed. We follow the arguments made in \cite{EnochMyProof} and \cite{DokuLDShanMcMilSupCritical}.

For $\lambda\in\mathbb{R}_+,$ we define
$$\mathcal{M}_\lambda(\mathcal{X}):= \left\{w\in \mathcal{M}(\mathcal{X}):\lambda w(a)\in\mathbb{N}\;\;\text{for all}\;\;a \in \mathcal{X}\right\},$$
$$\mathcal{M}_\lambda(\mathcal{X\times X}):= \left\{\pi\in \mathcal{M}(\mathcal{X\times X}):\lambda \pi(a,b)\in\mathbb{N}\;\;\text{for all}\;\;(a, b) \in \mathcal{X\times X}\right\}.$$

Again we denote by $\Theta_\lambda:=\mathcal{M}_\lambda(\mathcal{X})$ and $\Theta:=\mathcal{M}(\mathcal{X}).$

Finally, we make these two definitions,
$$ P^{(\lambda)}_{\omega_{\lambda}}\left(\eta_\lambda\right) :=\mathbb{P}\left(L^0_2 = \eta_\lambda | L_1^0 = \omega_{\lambda} \right)$$ and
$$P^{(\lambda)}(\omega_{\lambda}):= \mathbb{P}\left(L_1^0 = \omega_{\lambda}\right),$$
then the joint distribution of $L^0_1$ and $L^0_2$ is the mixture of $ P^{(\lambda)}_{\omega_{\lambda}}$ with $P^{(\lambda)}(\omega_{\lambda}):= \mathbb{P}\left(L_1^0 = \omega_{\lambda}\right)$ defined as
\begin{align*}
d\tilde{P}^{\lambda}(\omega_{\lambda},\eta_\lambda):= dP^{(\lambda)}_{\omega_{\lambda}}\left(\eta_\lambda\right)dP^{(\lambda)}\left(\omega_{\lambda}\right).
\end{align*}

\cite{Bi2004} provides the criteria  for the existence of large deviation principles for mixtures and the goodness of the rate function if individual large deviation principles have been established.

The lemmas below make certain the validity of large deviation principles for the mixtures and for the goodness of the rate function if individual large deviation principles are known.

We note that the family of measures $(P^{\lambda}: \lambda\in (0,\infty))$ is exponentially tight on $\Theta$.

\begin{lemma}
	\label{lem:expoTighT}
	The family of measures $(\tilde{P}: \lambda\in\mathbb{R}_+)$ is exponentially tight on $\Theta\times\mathcal{M}(\mathcal{X\times X}$.
\end{lemma}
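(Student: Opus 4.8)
The plan is to establish exponential tightness of the mixture family $(\tilde{P}^\lambda)$ on the product space $\Theta\times\mathcal{M}(\mathcal{X}\times\mathcal{X})$ by reducing it to exponential tightness of each marginal component and then taking products of the resulting compact sets. The key observation is that a family of probability measures on a product space is exponentially tight provided each coordinate projection is exponentially tight, since if $K_1^\alpha\subset\Theta$ and $K_2^\alpha\subset\mathcal{M}(\mathcal{X}\times\mathcal{X})$ are compact sets with $\limsup_{\lambda\to\infty}\frac{1}{\lambda}\log\tilde{P}^\lambda((K_i^\alpha)^c)\leq -\alpha$ for each marginal, then $K_1^\alpha\times K_2^\alpha$ is compact and its complement is contained in the union of the two cylindrical complements, so the $\frac{1}{\lambda}\log$-probability of escaping $K_1^\alpha\times K_2^\alpha$ is controlled by $-\alpha+o(1)$ via the elementary bound $\log(a+b)\leq\log 2+\max(\log a,\log b)$.

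First I would control the $\Theta$-marginal. Under $\tilde{P}^\lambda$ the first coordinate has law $P^{(\lambda)}(\omega_\lambda)=\mathbb{P}(L_1^0=\omega_\lambda)$, and by Lemma~\ref{thm:main1} the family $L_1^0$ already satisfies a full LDP with good rate function $I_1$, whose level sets $\{I_1\leq\alpha\}$ are compact; a full LDP with a good rate function automatically yields exponential tightness, giving the required compact sets $K_1^\alpha$. Equivalently, I can invoke Lemma~\ref{lem:LDP1c}, which shows $|I|\leq 2\lambda$ almost surely for large $\lambda$, so that the total mass $\|L_1^0\|$ is bounded and the support is confined to the locally finite set $\mathcal{X}\times\mathcal{B}$; this confines $L_1^0$ to a bounded, hence relatively compact, region of $\Theta$ in the $\tau$-topology up to a superexponentially small exceptional event.

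Next I would control the connectivity marginal. Here I would exploit the Bennett-type bound of Lemma~\ref{lem:LDP1c} together with the structure of $L_2^0$: since $\|L_2^0\|=2|E|/\lambda^2$ and the number of edges $|E|$ is controlled by $\binom{|I|}{2}$, the bound $|I|\leq 2\lambda$ almost surely forces $\|L_2^0\|$ to be bounded with probability escaping to $1$ superexponentially fast. Thus the second coordinate concentrates on measures of bounded total mass, which are relatively compact in the $\tau$-topology, producing compact sets $K_2^\alpha$ with the requisite escape estimate. Combining the two marginal bounds through the product construction yields the exponential tightness of $\tilde{P}^\lambda$ on $\Theta\times\mathcal{M}(\mathcal{X}\times\mathcal{X})$.

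The main obstacle I anticipate is the compactness argument in the $\tau$-topology rather than a weaker weak topology: bounded total mass alone does not guarantee relative compactness in $\tau$, so I would need to verify that the measures concentrate on a fixed locally finite carrier set (which the Poisson local finiteness and Lemma~\ref{lem:LDP1c} provide) and invoke the fact that on such a countable carrier the $\tau$-topology restricted to measures of uniformly bounded mass is metrizable and relatively compact. Making this step rigorous—ensuring the carrier can be chosen uniformly in $\lambda$ up to a superexponentially negligible event—is where the real care is required; once that is settled, assembling the product estimate is routine.
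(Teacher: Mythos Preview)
Your proposal is sound at the level of rigor the paper operates at, but the route you take for the connectivity marginal differs from the paper's. You control $\|L_2^0\|=2|E|/\lambda^2$ via the crude combinatorial bound $|E|\le\binom{|I|}{2}$ together with Lemma~\ref{lem:LDP1c}'s almost-sure bound $|I|\le 2\lambda$, which instantly forces $\|L_2^0\|$ to be uniformly bounded. The paper instead applies an exponential Chebyshev (Chernoff) inequality directly to $|E|$, using the conditional moment generating function computed in Lemma~\ref{lem:LDP2a} to obtain a tail estimate of the form $\mathbb{P}(|E|\ge \lambda^2 s)\le e^{-\lambda^2 s/2}$; the first marginal is handled, as you suggest, by simply noting that $(P^{(\lambda)})$ is already exponentially tight on $\Theta$. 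Your argument is more elementary and self-contained, since it avoids the MGF calculation entirely, and you are also more scrupulous about the $\tau$-topology compactness issue---the paper's proof only bounds the total mass of $L_2^0$ and never addresses why this yields compactness in $\tau$. On the other hand, the paper's MGF route gives a genuine exponential tail bound on $|E|$ that does not hinge on the ``almost surely'' phrasing of Lemma~\ref{lem:LDP1c}, which in truth is only a high-probability statement; your argument inherits whatever looseness that lemma has.
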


\begin{proof}
	Let $|E|$ be the cardinality of the edge set $E$ and $t= 1- (1-e^l)e^{-\eta}.$ Then using Chebyshev's inequality and Lemma (\ref{lem:LDP2a}), we get for sufficiently large $\lambda$, 
	\begin{align*}
	\mathbb{P}\left(|E|\leq \lambda^2l\right)&\leq e^{-\lambda^2l}\mathbb{E}\left[e^{|E|}\right]\\
	&\leq e^{-\lambda^2l}\sum_{i=0}^\infty\sum_{k=0}^i e^k {i\choose k} (e^{-\eta})^k(1-e^{-\eta})^{i-k}\dfrac{e^{-\lambda}\lambda^i}{i!}\\
	&\leq e^{-\lambda
		^2l}e^{-\lambda}e^{t\lambda}.
	\end{align*}
	
	Given $s\in \mathbb{N},$ we choose $s>q$ and observe that for sufficiently large $\lambda$, we have
	\begin{align*}
	\mathbb{P}\left(|E|\leq \lambda^2s\right)\leq e^{-\lambda^2s},
	\end{align*}
	Therefore, we obtain
	\begin{align*}
	\mathbb{P}\left(|E|\leq \lambda^2s/2\right)\leq e^{-\lambda^2s/2},
	\end{align*}
	and Lemma (\ref{lem:expoTighT}) is then proved.
\end{proof}

Recall  the  definition  of  the relative  entropy  and  observe  that  it  is a lower semi-continuous  function.


\begin{lemma}[]\label{equbo2}
	$I_w$  are  lower semi-continuous  functions.
\end{lemma}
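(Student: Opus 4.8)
The plan is to deduce the lower semi-continuity of each $I_\omega$ directly from its explicit form in~\eqref{6}, combining the lower semi-continuity of relative entropy recalled just above with the continuity of the total-mass functional in the $\tau$-topology, and then to close the argument (in particular across the jump to $+\infty$) using the Legendre--Fenchel structure of the rate function. Fix $\omega$. On the effective domain $\{\pi:\|\pi\|<\infty\}$ we have
\begin{equation*}
I_\omega(\pi)=\tfrac12\Big[H(\pi\,\|\,\Psi\omega\otimes\omega)+\|\Psi\omega\otimes\omega\|-\|\pi\|\Big],
\end{equation*}
a sum of three terms that I would treat separately.

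First, $\pi\mapsto H(\pi\,\|\,\Psi\omega\otimes\omega)$ is lower semi-continuous: this is the recalled property of relative entropy, which as an $f$-divergence with convex integrand $t\mapsto t\log t$ admits a variational representation as a supremum, over bounded measurable $g$, of affine maps $\pi\mapsto\langle g,\pi\rangle-c(g)$, each continuous in $\pi$; a pointwise supremum of continuous maps is lower semi-continuous. The middle term $\|\Psi\omega\otimes\omega\|$ is constant in $\pi$ and hence continuous. For the last term I would use that convergence $\pi_n\to\pi$ in the $\tau$-topology means $\langle f,\pi_n\rangle\to\langle f,\pi\rangle$ for every bounded measurable $f$; taking $f\equiv 1$ yields $\|\pi_n\|=\langle 1,\pi_n\rangle\to\langle 1,\pi\rangle=\|\pi\|$, so $\pi\mapsto-\|\pi\|$ is in fact continuous here. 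A sum of a lower semi-continuous function and continuous functions being lower semi-continuous, $I_\omega$ is lower semi-continuous on $\{\|\pi\|<\infty\}$.

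To cover the boundary, where $I_\omega=+\infty$, I would invoke the Legendre--Fenchel structure of the rate function: by construction the object furnished by the G\"{a}rtner--Ellis theorem is $I_\omega(\pi)=\tfrac12\sup_g\{\langle g,\pi\rangle-\Phi(g)\}$, the supremum ranging over bounded $g$. Each $\pi\mapsto\langle g,\pi\rangle-\Phi(g)$ is affine and, since $g$ is bounded, continuous in the $\tau$-topology, so the pointwise supremum is automatically lower semi-continuous on the whole space---including at measures of infinite mass---which is exactly the assertion of the lemma. Because the argument does not single out any particular $\omega$, the entire family $\{I_\omega\}$ consists of lower semi-continuous functions.

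The one genuinely delicate point---and the step I expect to demand the most care---is the continuity of the mass functional $\pi\mapsto\|\pi\|$. This holds precisely because we work in the $\tau$-topology, in which the constant function is an admissible test function; in the vague topology mass may escape to infinity and $\|\cdot\|$ is only lower semi-continuous, so the sign of the $-\|\pi\|$ term would then threaten lower semi-continuity and the decomposition argument of the second paragraph would collapse. The Legendre--Fenchel argument is robust to this subtlety, which is why I would present it as the definitive proof and keep the explicit decomposition only as an illustration.
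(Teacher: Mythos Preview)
Your proposal is correct, and its core decomposition argument---relative entropy is lower semi-continuous, the constant term is trivially continuous, and $\pi\mapsto\|\pi\|=\langle 1,\pi\rangle$ is continuous in the $\tau$-topology---is precisely the paper's proof, which simply notes that $I_\omega$ is ``a relative entropy plus a linear function.'' You are more careful than the paper in justifying \emph{why} the linear piece is continuous (the paper leaves this implicit), and your additional Legendre--Fenchel argument, while not present in the paper, gives a robust alternative that sidesteps the infinite-mass boundary case entirely.
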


\begin{proof}
	
	Note  that  the relative  entropy,   $ H(\pi | kw\otimes w),$ is  a  lower  semi-continuous  function  on  the  space $\mathcal{M}_{\lambda}(\mathcal{X\times X}) $  and  $||kw\otimes w|| - ||\pi||$ is  a  linear  function  in  $\pi$.    As  $I_w$  is  a  function  of  a  relative entropy  plus  a  linear  function,  one  can    conclude  that   	$I_w$ is lower semi-continuous.
	
\end{proof}

Using \cite[Theorem~5(b)]{Bi2004}, the two previous lemmas; Lemma~\ref{lem:expoTighT}, Lemma~\ref{equbo2} and the two  LDPs 
we have proved;
Theorem~\ref{thm:main1}  and  Theorem~\ref{thm:Main2} ensure
that under $(\tilde{P}^\lambda)$ the random variables $(L_1^{0}, L_2^{0})$ satisfy a large deviation principle on
$\mathcal{M}_{\lambda}(\mathcal{ X}) \times \mathcal{M}_{\lambda}(\mathcal{X\times X})$ with good rate function  $I$  which  ends  the  proof of  Theorem~\ref{thm:main}.

\section{Conclusion}\label{sec6}
In this final section, we present the summary, the key findings and a conclusion to the article. Some few recommendations are made which can be the starting point for future research works.\\

The paramount aim of this article is to determine the joint large deviation principle for the empirical pair measure and the empirical marked measure in a telecommunication system using the Boolean model. To be able to achieve this objective, it is of prime importance to achieve some required goals. We, first and foremost, generated the position of users in a communication area via a Poisson point process and provided the boolean model settings. We then, after finding the asymptotic behavior of the probability that two users are connected, proceeded to find the large deviation principle of the marginal law of the empirical marked measure and the conditional law of the empirical pair measure given the marginal law measure. Finally, the method of mixtures was employed to formulate the joint large deviation principles of the empirical measures.\\ 

The rate function is the most important part of the large deviation, as it characterizes rare behaviors that may occur in the telecommunication system and the most likely way an unlikely event will occur. It provides a bound on the probability that such rare events may happen. An example of such events is designated by the term frustration events which involve very bad service quality. The rate function provides a way to understand and control such events. Further, it highlights on how the major components of
the telecommunication system, namely: the users and the coverage area, relate in the whole system. The rate function of the joint LDP of the  empirical marked and connectivity measures $( L^0_1 , L^0_2 )$ is non-negative.\\

This work states that the pair measure $(L^0_1,L^0_2)$ of the spatial telecommunication system  which is represented by a Boolean network obeys a large deviation with speed $\lambda$ and large deviation probability
\begin{align}
P\left[(L^0_1,L^0_2) = (w,\pi)\right]\approx e^{-\lambda I(w,\pi)}\label{8},
\end{align}
where the rate function $I(w,\pi)$ is in terms of relative entropies $\, H(w|| \mu\otimes Q), \, H(\pi|| kw\otimes w)$ and a linear function $||\pi||.$ We establish the fact that in a Boolean network, the large deviation principles of the family of laws $P\left[(L^0_1,L^0_2)\in \Gamma\right]$ with speed $\lambda$ and rate function $I(w,\pi)$ is given by
\begin{align*}
-\inf_{(w,\pi)\in \Gamma^0}I(w,\pi)&\leq \liminf_{\lambda\rightarrow\infty}\dfrac{1}{\lambda}\log P\left[(L^0_1,L^0_2)\in \Gamma\right] \leq \limsup_{\lambda\rightarrow\infty}\dfrac{1}{\lambda}\log P\left[(L^0_1,L^0_2)\in \Gamma\right]\\
&\leq -\inf_{(w,\pi)\in cl(\Gamma)}I(w,\pi)
\end{align*}
During the research, we realized that there is a general category of assumptions for which the probability of connection in our settings converges. Different assumptions will lead to an LDP with a different rate. We may encourage researchers to try various classes of assumptions on this model and observe the various variations of the rate of the LDP.\\

Also, there is a penury of articles using the boolean model to describe the happenings in telecommunication systems and study the interactions in such systems. Researchers may take on the task of populating the literature with state-of-the-art work using Boolean model in spatial telecommunication modelling to be able to study the Gibbs measure in this setting.\\

Finally, based on the rate function obtained, a maximization problem can then be solved involving the Gibbs distribution of this system to detect the best configuration of the spatial telecommunication system setting used in this work that minimizes the breach in communication.\\

{\bf \Large Conflict  of  Interest}

The  authors  declare  that they  have  no  conflict  of  interest.\\ 


\end{document}